\numberwithin{equation}{section}
\newtheorem{Theorem}[equation]{Theorem}
\newtheorem{Proposition}[equation]{Proposition}
\newtheorem{Lemma}[equation]{Lemma}
\newtheorem{Conjecture}[equation]{Conjecture}
\theoremstyle{definition}
\newtheorem{Remark}[equation]{Remark}
\newtheorem{eg}[equation]{Example}
\newenvironment{example}[1][]{\begin{eg}[#1] \pushQED{\qed}}{\popQED \end{eg}}
\newtheorem{Definition}[equation]{Definition}
\newcommand{\cA}{\mathcal{A}}
\newcommand{\bC}{\mathbf{C}}
\newcommand{\cC}{\mathcal{C}}
\newcommand{\cD}{\mathcal{D}}
\newcommand{\cH}{\mathcal{H}}
\newcommand{\cK}{\mathcal{K}}
\newcommand{\cO}{\mathcal{O}}
\newcommand{\cP}{\mathcal{P}}
\newcommand{\cS}{\mathcal{S}}
\newcommand{\cT}{\mathcal{T}}
\newcommand{\cU}{\mathcal{U}}
\newcommand{\cZ}{\mathcal{Z}}
\newcommand{\fg}{\mathfrak{g}}
\newcommand{\fh}{\mathfrak{h}}
\newcommand{\kk}{\mathbbm{k}}
\newcommand{\BB}{\mathsf{B}}
\newcommand{\CC}{\mathbb{C}}
\newcommand{\HH}{\mathbb{H}}
\newcommand{\PP}{\mathbb{P}}
\newcommand{\QQ}{\mathbb{Q}}
\newcommand{\RR}{\mathbb{R}}
\newcommand{\WW}{\mathsf{W}}
\newcommand{\ZZ}{\mathbb{Z}}
\newcommand{\one}{\mathbbm{1}}
\renewcommand{\phi}{\varphi}
\renewcommand{\tilde}[1]{\widetilde{#1}}
\newcommand{\ol}[1]{\overline{#1}}
\newcommand{\todo}[1]{\textcolor{blue}{$[\star$ To do: #1 $\star]$}}
\newcommand{\oded}[1]{\textcolor{red}{$[\star$ Oded: #1 $\star]$}}
\renewcommand{\hom}{\operatorname{Hom}}
\DeclareMathOperator{\rank}{rank}
\DeclareMathOperator{\reg}{reg}
\DeclareMathOperator{\Aut}{Aut}
\DeclareMathOperator{\Stab}{Stab}
\DeclareMathOperator{\Path}{Path}
\DeclareMathOperator{\Conf}{Conf}
\DeclareMathOperator{\regg}{reg}
\DeclareMathOperator{\std}{std}
\newcommand{\mapsfrom}{\mathrel{\reflectbox{\ensuremath{\mapsto}}}}
\DeclareMathOperator{\Deck}{Deck}
\newcommand{\GL}{GL}
\newcommand{\mmod}{\mathrm{-mod}}
\newcommand{\proj}{\mathrm{-proj}}
\newcommand{\dgmod}{\mathrm{-dgmod}}
\newcommand{\catname}[1]{{\mathsf{#1}}}
\newcommand{\Fib}{\catname{Fib}}
\DeclareMathOperator{\FPdim}{FPdim}
\DeclareMathOperator{\Irr}{Irr}
\renewcommand{\vec}{\catname{vec}}
\newcommand{\xMapsto}[2][]{\ext@arrow 0599{\Mapstofill@}{#1}{#2}}
\def\Mapstofill@{\arrowfill@{\Mapstochar\Relbar}\Relbar\Rightarrow}
\newcommand\iso\cong
\newcommand\into\hookrightarrow
\newcommand\onto\twoheadrightarrow
\newcommand{\Hom}{\operatorname{Hom}}
\newcommand{\nc}{\newcommand}
\nc{\la}{\lambda}
\nc{\Iso}{\mathsf{Iso}}
\nc{\Id}{\mathrm{Id}}
\begin{document}

\title[Periodic elements and stability conditions]{Periodic elements in finite type Artin--Tits groups \\ and stability conditions}

\author{Edmund Heng}
\address{E.~Heng: School of Mathematics and Statistics, University of Sydney, Australia}
\email{edmund.heng@sydney.edu.au}
\author{Anthony Licata}
\address{A.~Licata: CNRS-ANU International Research Laboratory FAMSI and Mathematical Sciences Institute, The Australian National University, Australia}
\email{anthony.licata@anu.edu.au}
\author{Oded Yacobi}
\address{O.~Yacobi: School of Mathematics and Statistics, University of Sydney, Australia}
\email{oded.yacobi@sydney.edu.au}
\date{}

\begin{abstract}
Periodic elements in finite type Artin--Tits groups are elements some positive power of which is central.  We give a dynamical characterisation of periodic elements via their action on the corresponding 2-Calabi--Yau category and on its space of (fusion equivariant) Bridgeland stability conditions.  The main theorem is that an element $\beta$ is periodic if and only if $\beta$ has a fixed point in the stability manifold.
\end{abstract}

\maketitle

\section{Introduction}

Let $(\WW,S)$ be a finite type Coxeter system, with associated Artin--Tits group $\BB$.  The group $\BB$ acts faithfully by autoequivalences on a 2-Calabi-Yau category $\cT$.  This realises the Artin--Tits group $\BB$ as a spherical twist group, a categorical analog of a reflection group.  This realisation is important in part because it allows dynamics to be introduced into the study of Artin--Tits groups, parallel to other settings in geometric group theory.  In particular, the action of $\BB$ on the space of Bridgeland stability conditions plays a central role in this dynamical study (see Section \ref{sec:final} for more details).  The goal of this note is to give a dynamical description of periodic elements of $\BB$ -- those elements $\beta\in \BB$ such that some non-zero power of $\beta$ is central --  using the action of $\BB$ on the space of stability conditions. The main theorem is as follows.

\begin{Theorem}\label{thm:main2}
A element $\beta \in \BB$ is periodic if and only if $\beta$ has a fixed point in $\Stab_\cC(\cT)/\CC$.
\end{Theorem}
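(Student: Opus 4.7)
The plan is to prove both directions using the interplay between central elements of $\BB$ and the $\CC$-action on $\Stab_\cC(\cT)$.

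For the forward direction ($\Rightarrow$), suppose $\beta^n = z$ is central in $\BB$ for some $n \geq 1$. In finite type, the central generator of $\BB$ acts on $\cT$ as a shift $[k]$ for some integer $k$ (a property that should be established earlier in the paper or imported from the literature on spherical twist groups in finite type). Since the shift $[1]$ coincides with the action of $1 \in \CC$ on $\Stab_\cC(\cT)$, the autoequivalence $\beta^n$ acts trivially on $\Stab_\cC(\cT)/\CC$. Therefore $\beta$ acts on $\Stab_\cC(\cT)/\CC$ with finite order. I would then invoke a fixed point theorem: assuming the fusion-equivariant quotient $\Stab_\cC(\cT)/\CC$ is a contractible complex manifold (or, in the known type A cases, a complex hyperbolic ball), any finite-order holomorphic automorphism has a fixed point, either by Smith theory or by a standard averaging/Bruhat--Tits argument.

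For the reverse direction ($\Leftarrow$), suppose $\beta \cdot [\sigma] = [\sigma]$ in $\Stab_\cC(\cT)/\CC$. Lift to find $\lambda \in \CC$ with $\beta \cdot \sigma = \lambda \cdot \sigma$, so that $\beta^m \cdot \sigma = (m\lambda) \cdot \sigma$ for every $m \geq 1$. The first subclaim is that $\lambda \in \QQ$: the central charge $Z$ of $\sigma$ is an eigenvector of $\beta_\ast$ acting on $K_0(\cT) \otimes \CC$ with eigenvalue $e^{-i\pi \lambda}$; but the $\BB$-action on $K_0$ factors through the reflection representation of the finite Coxeter group $\WW$, so all eigenvalues are roots of unity, forcing $\lambda \in \QQ$. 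Choose $m$ with $m\lambda = k \in \ZZ$; then $\beta^m \cdot \sigma = \sigma[k]$. The second subclaim is that this forces $\beta^m$ to equal the central autoequivalence $[k]$, so that $\beta^m$ is central and $\beta$ is periodic.

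The main obstacle is the final rigidity step in the reverse direction: deducing from $\beta^m \cdot \sigma = \sigma[k]$ (an equality of the action on a single stability condition) that $\beta^m = [k]$ as autoequivalences of $\cT$. The natural approach is to use the heart $\cA$ of $\sigma$ and its simples to constrain $\beta^m [-k]$: this autoequivalence fixes $\sigma$ exactly, hence preserves $\cA$ and permutes its simples while preserving their central charges, so it permutes simples of equal phase. The fusion equivariance should rule out non-trivial permutations by forcing compatibility with the fusion-module structure, and the faithfulness of $\BB \acts \cT$ then upgrades ``acts as the identity on the generating set'' to ``equals the identity.'' Making this rigidity argument precise --- in particular handling fusion-compatible permutations of isomorphic simples --- is where the real work of the proof lies.
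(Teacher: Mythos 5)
The forward direction is where your proposal has a genuine gap. Reducing to the statement that $\beta$ acts with finite order on $\Stab_\cC(\cT)/\CC$ is correct (and the input you need, that $\Theta$ acts as a shift, is Proposition \ref{lem:shift2}), but the fixed point theorem you then invoke is not available. A finite-order homeomorphism of a contractible finite-dimensional manifold need not have a fixed point --- Conner and Floyd constructed fixed-point-free actions of $\ZZ/pq$ on Euclidean spaces --- so Smith theory only covers prime-power order; and the averaging/Bruhat--Tits route would require an invariant complete non-positively curved metric on $\Stab_\cC(\cT)/\CC$ (or a realisation as a bounded domain), none of which is known for general finite type $\Gamma$. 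The paper circumvents this entirely: Bessis's Springer theory for braid groups (Theorem \ref{thm:bessis}) shows every primitive periodic element is conjugate to a $d$-th root of $\Theta$ with $d$ regular, so it suffices to exhibit a fixed point for \emph{one} $d$-th root for each regular $d$; the fixed point is then constructed explicitly, taking as central charge a Springer regular eigenvector $Z$ with $w\cdot Z=\zeta_d Z$, lifting $\ol{Z}$ through the covering $\Stab_\cC(\cT)\to\cH/\WW$ to the stability condition with standard heart, and checking that Bessis's standard $d$-th root loop acts on it by a real rescaling. This explicit Springer-theoretic construction is the main content of the proof and is absent from your proposal.

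Your backward direction, by contrast, follows the paper's argument quite closely. The rationality step via eigenvalues of the finite-order element $w\in\WW$ is a clean variant of Lemma \ref{lem:rotaterational} (which instead uses the finite set of phases of roots), and both work. However, you have misplaced the difficulty in the final rigidity step: there is no need to rule out non-trivial permutations of the simples of the heart, fusion-equivariantly or otherwise. Since the set of isomorphism classes of simples is finite, the permutation induced by $\Sigma_{\beta^{m}}[-k]$ has some finite order $\ell$, and then $\Sigma_{\beta^{m\ell}}$ sends each simple to itself up to a shift; after conjugating so that the heart is $\heartsuit_\std$ (legitimate by Theorem \ref{thm:HLmain}(3)), Lemma \ref{lem:rigid2} upgrades this to $\Sigma_{\beta^{m\ell}}\cong[k\ell]$, and faithfulness of the action (Theorem \ref{thm:HLmain}(2)) then forces $\beta^{m\ell}$ to be central. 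So the ``real work'' you anticipate reduces to the already-established rigidity lemma of \cite{HL24} together with passing to a further power.
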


In the above theorem, the space $\Stab_\cC(\cT)$ is a distinguished connected component of the space of fusion-equivariant stability conditions on $\cT$, considered in \cite{HL24}, and $\Stab_\cC(\cT)/\CC$ is the quotient by the natural $\CC$ action.  In simply-laced type ADE, the space $\Stab_\cC(\cT)$ is the classical space of stability conditions which appears in the works of Thomas \cite{Thomas06}, Bridgeland \cite{Bridgeland09} and Brav-Thomas \cite{BravThomas}.  Theorem \ref{thm:main2} gives an analog for finite type Artin groups of the characterisation of periodic elements in mapping class groups as those mapping classes which admit a fixed point in Teichm\"uller space. 

The proof of Theorem \ref{thm:main2} relies on two essential inputs.  The first is Bessis's ``Springer theory for braid groups'' \cite{Bes15} which allows us to reduce the study of arbitrary periodic elements of $\BB$ to that of $d$-th roots of the full twist, where the positive integer $d$ is regular in the sense of Springer \cite{Springer74}.  In particular, Springer theory provides us with the central charge of the sought-after fixed point for such a $d$-th root.  The second input is the description of $\Stab_\cC(\cT)$ as the universal cover of the space of central charges, which is identified with the complexified hyperplane complement studied classically in Coxeter theory \cites{Thomas06, Bridgeland09, HL24}.  This covering property is our main motivation for using 2-Calabi--Yau categories (as opposed to other categories with an action of $\BB$) in the dynamical study of $\BB$.  In fact, our argument for the forward implication of the main theorem does not make fundamental use of the modular description of the universal cover as a stability manifold, whereas the backward implication does (see Remarks \ref{rem:direct1} and \ref{rem:direct2}, respectively).

A dynamical classification of arbitrary elements of $\BB$ also requires understanding the pseudo-Anosov elements, which we expect to exhibit sink-source dynamics on the boundary of $\Stab_\cC(\cT)/\CC$, and which should have positive entropy in the sense of \cite{DHKK}.  Categorical frameworks for both pseudo-Anosov autoequivalences and for the boundary of $\Stab_\cC(\cT)/\CC$ have been considered recently (see e.g. \cites{DHKK, FFHKL, BDL} and Section \ref{sec:final}).

\subsection*{Acknowledgement}
O.Y. thanks MPIM-Bonn, and all three authors thank IH\'{E}S, where part of this work was undertaken. O.Y. and A.L. acknowledge support from Discovery Project grants from the Australian Research Council. 

\section{Background}

\subsection{Artin--Tits groups of finite type Coxeter systems} Let $\Gamma$ be finite type Coxeter graph, that is, one whose connected components are all of type $A,B=C,D,E,F,G,H,I$. Denote the set of vertices of $\Gamma$ by $\Gamma_0$.  There is an $m_{s,t}$-labeled edge connecting $s, t\in \Gamma_0$ whenever $m_{s,t} \geq 3$, and there is no edge between vertices $s,t\in \Gamma_0$ when $m_{s,t}=2$.  

The \textbf{Coxeter group} $\WW$ of $\Gamma$ is a finite group with presentation
\[
\WW=\langle s\in \Gamma_0 \mid (\forall s,t \in \Gamma_0)\;  s^2=1,\; (st)^{m_{s,t}}=1 \rangle.
\]
The \textbf{Artin--Tits} group $\BB$ is presented as
\begin{align}\label{eq:braidrels}
\BB=\langle \sigma_s \text{ for each }s \in \Gamma_0 \mid (\forall s,t \in \Gamma_0)\; \sigma_s\sigma_t\sigma_s\cdots = \sigma_t\sigma_s\sigma_t\cdots  \rangle,
\end{align}
with $m_{s,t}$ terms on either side of the equation. 

The surjective group homomorphism $\BB \to \WW$ mapping $\sigma_s$ to $s$ admits a set-theoretic section (positive lift) $w \mapsto \sigma_w$, where if $w=st\cdots $ is a reduced expression then $\sigma_w \coloneqq \sigma_{s}\sigma_t\cdots$. 
In particular, let $w_0 \in \WW$ denote the longest element and set $\Delta \coloneqq \sigma_{w_0}$.

The real vector space $V_\RR=\bigoplus_{s \in \Gamma_0}\RR\alpha_s$ has a symmetric bilinear form $(\cdot,\cdot):V_\RR \times V_\RR \to \RR$ given by 
\[
(\alpha_s,\alpha_s)=2, \ \ (\alpha_s,\alpha_t)=-2\cos\left(\frac{\pi}{m_{s,t}}\right) \text{ for } s\neq t.
\]
The \textbf{Tits representation} $\WW \to \GL(V_\RR)$ is defined on generators $s\in \WW$ by 
\[
    s\cdot v = v-(\alpha_s,v)\alpha_s.
\] 
It is a classical theorem of Tits that this representation is faithful. The set of roots for $\WW$ is 
\[
R:=\{w\cdot \alpha_s \mid w \in \WW, s \in \Gamma_0 \}.
\]
We let $V_\CC':=\Hom_\RR(V_\RR,\CC)$.  The $\WW$-action on $V_\CC'$ is contragradient to that on $V_\RR$: for $Z \in V_\CC', v \in V_\RR$, and $w\in\WW$,
\[
(w\cdot Z)(v) = Z(w^{-1}\cdot v). 
\]
(Here we use $Z$ to denote vectors in $V_\CC'$ since in the sequel regular vectors in $V_\CC'$ will be central charges for stability conditions.)

Let $\iota:\Gamma \to \Gamma$ be the involution induced by $w_0$, i.e. $\alpha_{\iota(s)}=-w_0\cdot\alpha_s$. By Deligne's Theorem \cite[Theorem 4.21]{Del72}, the centre of $\BB$ is infinite cyclic and generated by $\Delta^2$ if $\iota$ is non-trivial, and  by  $\Delta$ otherwise. We define the \textbf{full-twist} $\Theta=\Delta^2$; so $\Theta$ is always central, but whether or not $\Theta$ generates the center of $\BB$ depends on $\Gamma$. 

\subsection{Springer theory for braid groups}

We first recall some notions from Springer theory for Coxeter groups \cite{Springer74}. A vector $Z\in V_\CC'$ is \textbf{regular} if $w\cdot Z=Z$ implies that $w=e$. An element $w\in \WW$ is \textbf{$d$-regular} if it admits a regular eigenvector $Z \in V_\CC'$ with eigenvalue $\zeta_d$, where $\zeta_d$ is a primitive $d$-th root of unity. Finally, an integer $d\in \ZZ_{>0}$ is \textbf{regular} if there exist a $d$-regular element in $\WW$. The classification of regular integers for finite type Coxeter groups is well-known. 

In his seminal work, Springer studied centralisers of regular elements. In particular, he shows that if $w$ is $d$-regular, then its centraliser $\WW_0=C_\WW(w)$ is a complex reflection group acting on the eigenspace $\ker(w-\zeta_d)$.

For a root $\alpha \in R$, define the hyperplane in $V_\CC'$
\[
H_\alpha = \{Z \in V_\CC' \mid Z(\alpha)=0 \}.
\]
Note that $w(H_\alpha) = H_{w\cdot \alpha}$.
The (complexified) \textbf{hyperplane complement} 
\[
\cH  \coloneqq V_\CC'\setminus \bigcup_{\alpha \in R}H_\alpha,
\]
is precisely the locus of regular vectors in $V_\CC'$.
The Coxeter group $\WW$ acts freely on $\cH$, and choosing any $Z \in \cH$ as a basepoint, there is a group isomorphism $\BB \cong \pi_1(\cH/\WW,\ol{Z})$, where $\ol{Z}$ denotes the image of $Z$ in $\cH/\WW$ \cite{VdL_thesis}.
The following loop $\vartheta\in \pi_1(\cH/\WW,\ol{Z})$ plays an important role in what follows:
\begin{align}\label{eq:vartheta}
\vartheta:[0,1] \to \cH/\WW,\; t \mapsto e^{2\pi it}\ol{Z}.
\end{align}
It follows essentially from \cite[Theorem 2.24]{BMR98} that one can choose the isomorphism above so that $\vartheta$ corresponds to the full twist $\Theta$; we give an independent proof of this fact later in Lemma \ref{lem:thetas}.

\begin{Definition}
    A braid $\beta \in \BB$ is \textbf{periodic} if a non-trivial power of it lies in the centre of $\BB$.  Equivalently, $\beta$ is periodic if $\beta^p=\Delta^q$ for some $p,q \in \ZZ, p\neq 0$. 
\end{Definition}

We will now recall Bessis' remarkable ``Springer theory for braid groups'', which is crucial in what follows. One important aspect of this theory is that certain periodic elements -- those that appear as positive integral roots of the full-twist -- play the role of regular elements for $\BB$.  More precisely,

\begin{Theorem}[\cite{Bes15}, Theorem 12.4]\label{thm:bessis}
    Let $d$ be a positive integer, and let $\zeta_d=e^{\frac{2\pi i}{d}}$.
    \begin{enumerate}
        \item There exists a $d$-th root of $\Theta$ if and only if $d$ is regular for $\WW$.
        \item Let $d$ be regular for $\WW$. Then there is a single conjugacy class of $d$-th roots of $\Theta$ in $\BB$.
        \item Let $\beta$ be a $d$-th root of $\Theta$. Let $w$ be the image of $\beta$ in $\WW$. Then, up to conjugation, $w$ is $d$-regular with eigenvalue $\zeta_d$, and the centraliser $C_{\BB}(\beta)$ is isomorphic to the Artin--Tits  group corresponding to the Coxeter group $\WW_0=C_\WW(w)$.
    \end{enumerate}
\end{Theorem}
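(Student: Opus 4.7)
The plan is to translate the three claims into topology via the isomorphism $\BB \cong \pi_1(\cH/\WW, \ol{Z})$: a $d$-th root of $\Theta$ is the class of a loop $\gamma$ in $\cH/\WW$ whose $d$-fold concatenation is homotopic to $\vartheta$, conjugacy in $\BB$ becomes free homotopy of loops, and centralisers become fundamental groups of fixed-point subspaces inside $\cH$. The three parts then follow, respectively, from constructing explicit loops out of regular eigenvectors, from Springer's uniqueness-up-to-$\WW$-conjugacy for $d$-regular elements, and from a Springer-style description of the $w$-fixed subspace as a hyperplane complement for the centraliser reflection group.

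For (1), the forward implication is explicit. Given $w \in \WW$ that is $d$-regular with regular eigenvector $Z \in V_\CC'$, so $w \cdot Z = \zeta_d Z$, the path
\[
\gamma \colon [0,1] \to \cH, \qquad \gamma(t) = e^{2\pi i t/d}\, Z
\]
remains in $\cH$ because scaling a regular vector preserves regularity, and satisfies $\gamma(1) = \zeta_d Z = w \cdot Z$, so descends to a loop $\ol\gamma$ in $\cH/\WW$ whose $d$-th iterate is $\vartheta$; the class $[\ol\gamma]$ is then a $d$-th root of $\Theta$. For the converse, given $\beta$ with $\beta^d = \Theta$, I would lift $\beta$ to an automorphism of the universal cover of $\cH/\WW$ and exploit the fact that $\beta^d$ is the deck translation corresponding to $\Theta$ in order to manufacture an invariant point whose image in $\cH$ is a regular eigenvector of some $w \in \WW$ with eigenvalue $\zeta_d$.

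For (2), Springer's classical theorem asserts that all $d$-regular elements of $\WW$ with eigenvalue $\zeta_d$ are $\WW$-conjugate; to promote this to $\BB$-conjugacy of the corresponding $d$-th roots, one uses that the regular locus of the eigenspace $\ker(w - \zeta_d)$ is path-connected, so any two loops of the form $\ol\gamma$ built above are freely homotopic through this family, hence conjugate in $\pi_1$. For (3), a braid centralises $[\ol\gamma]$ precisely when it is represented by a loop that deforms into the regular locus of $\ker(w - \zeta_d)$; Springer identifies this regular locus as the complement of a reflection arrangement for $C_\WW(w)$, and the $K(\pi,1)$ property of that arrangement identifies its fundamental group quotient with the Artin--Tits group associated to $C_\WW(w)$.

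The main obstacle is clearly the converse direction of (1): producing a regular eigenvector out of the purely group-theoretic relation $\beta^d = \Theta$. This is where genuine depth enters, requiring the $K(\pi,1)$-property of $\cH/\WW$ (Deligne in the finite real Coxeter case) combined with a Lefschetz- or Smith-theoretic fixed-point argument on the contractible universal cover to extract the sought-after eigenvector. Parts (2) and (3) are comparatively routine once (1) and Springer's classical results on regular elements are in hand.
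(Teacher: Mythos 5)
First, a structural point: the paper does not prove this statement at all --- it is quoted from Bessis (\cite{Bes15}, Theorem 12.4) and used as a black box, so there is no internal proof to compare against. Your forward direction of (1) is correct and is precisely the construction the paper itself deploys later (the ``standard $d$-th root'' $\lambda(t)=e^{2\pi i t/d}\,\ol{Z}$ in the proof of Proposition \ref{prop:direct1}). The rest of your sketch, however, has genuine gaps, and the parts you call ``comparatively routine'' are in fact where the depth of Bessis's theorem lives.

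Concretely: for the converse of (1), the Lefschetz fixed point theorem does not apply to the non-compact contractible universal cover, and Smith theory needs a genuine finite group action; the relation $\beta^d=\Theta$ with $\Theta\neq 1$ gives a finite cyclic action only after passing to a quotient by the $\CC$- (equivalently $\CC^\times$-) action, and justifying that this quotient is a space to which a fixed-point theorem applies is a substantial argument, not a remark. For (2), your free homotopy through the regular locus of $\ker(w-\zeta_d)$ only shows that any two \emph{standard} roots $[\ol{\gamma}]$ built from regular eigenvectors are conjugate; the hard content of the theorem is that an \emph{arbitrary} $d$-th root of $\Theta$ is conjugate to a standard one, which your argument assumes rather than proves. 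For (3), the assertion that a braid centralises $[\ol{\gamma}]$ exactly when it is represented by a loop deforming into the regular eigenspace arrangement complement \emph{is} the theorem: both injectivity and surjectivity of the induced map from the braid group of $\WW_0=C_\WW(w)$ to $C_\BB(\beta)$ require Bessis's Garside-theoretic machinery (dual braid monoids, Lyashko--Looijenga coverings), and the $K(\pi,1)$ property is not what identifies a fundamental group with a braid group --- for the complex reflection group $\WW_0$ the braid group is by definition that fundamental group, so invoking $K(\pi,1)$ here does no work. Your outline captures the correct geometric translation, but it is a statement of what must be proved rather than a proof.
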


We refer to \cite[Theorem 7]{Soroko} for a comprehensive list of regular integers in all finite types and a characterisations of periodic elements, although this will not be necessary below.

\subsection{2-Calabi--Yau categories of finite type Coxeter systems}
In this section we recall basic facts about the 2-Calabi--Yau categories associated to finite type Coxeter systems.  
\subsubsection{Simply-laced type}
To begin, we first recall the definition of the 2-Calabi-Yau category when $\Gamma$ is simply-laced and irreducible (that is, of type $A_n,D_n,E_6,E_7,E_8$).
There are several approaches to defining this category, see e.g.\ \cite{BDL2} for an algebraic definition.  We recall here the geometric approach to defining $\cT$, following \cite{Bridgeland09}.  

The McKay correspondence associates to any simply-laced finite type diagram $\Gamma$ a finite subgroup $G$ of $SU(2)$, and a Kleinian singularity $\CC^2/G$; let 
\[
    \pi: Y_\Gamma \longrightarrow \CC^2/G
\]
denote the minimal resolution, and $D^b(Y_\Gamma)$ the bounded derived category of coherent sheaves on $Y_\Gamma$.  The 2-Calabi--Yau category $\cT$ is by definition the full triangulated subcategory of $D^b(Y_\Gamma)$ containing sheaves $E$ such that:
\begin{enumerate}
    \item $E$ is supported on $\pi^{-1}(0)$, and
    \item $R\pi_*E = 0$.
\end{enumerate}
As usual, we let $[1]:\cT\to \cT$ denote the homological shift functor.

The graph $\Gamma$ arises as the
dual graph of the exceptional divisor of $\pi$. The vertices of $\Gamma$ correspond to the irreducible
components of the exceptional divisor (which are copies of $\PP^1$); the edges of $\Gamma$ correspond
to intersection points of the components. For $s\in \Gamma$, let $P_s\in D^b(Y_\Gamma)$ be the sheaf
${i_s}_*\cO(-1)$ where $i_s:\PP^1 \rightarrow Y_\Gamma$ is the inclusion of the component of the exceptional divisor indexed by $s$. The objects $P_s$ form a $\Gamma$-configuration of spherical objects, and $\cT$ is generated by
them as a triangulated category.

The category $\cT$ is 2-Calabi--Yau, meaning that there are isomorphisms $$\hom^\bullet(X,Y) \cong \hom^\bullet(Y,X[2])^*$$ natural in $X,Y$, (here $(-)^*$ denotes the linear dual).  In addition, $\cT$ has a \textbf{standard $t$-structure} whose heart is the extension closure of the $P_s$, and the spherical objects $P_s$ are the simple objects of this heart.  We denote this heart by $\heartsuit_\std$ in what follows.  

The Artin--Tits group $\BB$ acts on $\cT$ (on the left) by autoequivalences, with the generator $\sigma_s$ acting by $\Sigma_{\sigma_s}$, the Seidel-Thomas spherical twist in the spherical object $P_s$ \cite{SeidThom01}. In particular, these autoequivalences satisfy the braid relations \eqref{eq:braidrels} up to isomorphism. Hence, for any $\beta \in \BB$ we have an associated autoequivalence $\Sigma_\beta$, which is well-defined up to isomorphism. 

As a result, $\BB$ acts linearly on the real Grothendieck group $K_0(\cT)\otimes_\ZZ \RR$, with the generator $\sigma_s$ acting by
\[
 [X] \mapsto [X] - \chi  \hom^\bullet (P_s,X) [P_s].
\]
Here $\chi $ denotes the Euler characteristic, so that if $U^\bullet=\oplus_n U_n$ is a $\ZZ$-graded vector space, then 
$\chi U^\bullet = \sum_n (-1)^n \dim U_n$.  In fact 
\[
\chi  \hom^\bullet (P_s,P_s) = 2, \ \ \chi  \hom^\bullet (P_s,P_t)=(\alpha_s,\alpha_t)=-2\cos(\frac{\pi}{m_{s,t}}) \text{ for } s\neq t.
\]
From this, one sees that
\begin{itemize}
\item the action of $\BB$ on $\cT$ induces an action of $\WW$ on $K_0(\cT)\otimes_\ZZ \RR$, and
\item the resulting representation of $\WW$ is isomorphic to the Tits representation $V_\RR$.
\end{itemize}
In particular, there is an isomorphism of representations of $\WW$
\[
V'_\CC \cong \hom_{\ZZ}(K_0(\cT), \CC),
\]
and elements of $V'_\CC$ may be thought of as homomorphisms from the Grothendieck group of $\cT$ to $\CC$.

When $\Gamma$ is simply-laced but not irreducible, so that each connected component of $\Gamma$ is type ADE, we take $\cT$ to be the product of the categories associated to each connected component.

\subsubsection{Non-simply-laced type}
In other finite types the construction of the 2-Calabi--Yau category $\cT$ and the Artin--Tits group action on it is very similar to the one described above; in fact, when $\Gamma$ is not of type ADE, the category $\cT$ associated to $\Gamma$ is an ADE category as above, equipped with the action of a fusion category associated to the non-simply-laced Coxeter system.

More precisely, in \cite{HL24}, we associate to each $\Gamma$ two pieces of data:
\begin{itemize}
    \item a fusion category $\cC$, and
    \item a simply-laced diagram $\widetilde{\Gamma}$, known as the ``unfolding" of $\Gamma$.
\end{itemize}

Since the precise construction is not used directly in any of the proofs below, we do not give the general definition of $\cC$ or the unfolding here, but summarise only the essential points. Interested readers can consult \cite{HL24}  for details.  We stress that if $\Gamma$ is simply-laced, the fusion category $\cC$ is just the category $\vec$ of finite dimensional $\CC$-vector spaces, and the unfolding of $\Gamma$ is $\Gamma$ itself.  

The fusion category $\cC$ and the unfolded diagram $\widetilde{\Gamma}$ are compatible in that there is a right action of $\cC$ on the triangulated category $\cT_{\widetilde{\Gamma}}$ associated to $\widetilde{\Gamma}$.  We define the triangulated category associated to $\Gamma$ to be the \textbf{triangulated module category} $\cT = (\cT_{\widetilde{\Gamma}}, \cC_\Gamma)$; that is, the relevant triangulated category in non-simply-laced type is  a simply-laced type category, {\it together with the action of a fusion category on it}.  (This is not precisely the definition given in \cite{HL24}, but it is equivalent by \cite[Proposition 8.4]{HL24}.)

\begin{example}
Let $\Gamma = I_2(5) = 
\begin{tikzcd}[every arrow/.append style = {shorten <= -.2em, shorten >= -.2em}]
s & t
\arrow["5", no head, from=1-1, to=1-2]
\end{tikzcd}
$.
Then 
\[
\widetilde{\Gamma} = A_4 
	=:
	\begin{tikzcd}[every arrow/.append style = {shorten <= -.2em, shorten >= -.2em}]
	3 & 2 \\
	1 & 4
	\arrow[no head, from=1-1, to=1-2]
	\arrow[no head, from=1-1, to=2-2, crossing over]
	\arrow[no head, from=2-1, to=1-2, crossing over]
	\end{tikzcd}.
\]
The fusion category $\cC$ is given by the Fibonacci fusion category $\Fib$.
This is the unique fusion category that has two (non-isomorphic) simple objects $\one$ and $\Phi$ with fusion rules given by $\Phi \otimes \Phi \cong \one \oplus \Phi$ and $\one$ the monoidal unit \cite{Ostrik_rank2}.
The action of $\Phi \in \cC$ on $\cT$ sends 
\begin{align*}
    P_1 \mapsto P_3, \quad &P_3 \mapsto P_1 \oplus P_3; \\
    P_4 \mapsto P_2, \quad &P_2 \mapsto P_2 \oplus P_4.
\end{align*}
\end{example}

The Grothendieck group $K_0(\cC)$ is a fusion ring, and the Frobenius-Perron map \cite[Definition 3.3.3]{EGNO}
\[
\FPdim: K_0(\cC) \longrightarrow \RR
\]
makes $\RR$ and $\CC$ into $K_0(\cC)$-modules; when $\cC = \vec$, $K_0(\cC) \cong \ZZ$ and $\FPdim$ is just the usual dimension map. 
Thus, in all types, there are isomorphisms of $\WW$-modules
\[
V_\RR \cong K_0(\cT)\otimes_{K_0(\cC)}\RR, \ \ \ V'_\CC \cong \hom_{K_0(\cC)}(K_0(\cT),\CC).
\]
In this way, elements of $V'_\CC$ may be thought of as fusion-ring equivariant homomorphisms from the Grothendieck group of $\cT$ to $\CC$.

For later use, we prove that the full-twist acts by a shift on the 2-CY category $\cT$:

\begin{Proposition}\label{lem:shift2}
    The full-twist autoequivalence $\Theta$ acts on $\cT$ as the Serre functor [2].
\end{Proposition}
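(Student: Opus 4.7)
My plan is to verify $\Sigma_\Theta \cong [2]$ first on the simple objects of the standard heart $\heartsuit_\std$, and then promote the resulting object-wise isomorphism to an isomorphism of functors.

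The starting point is a computation of $\Sigma_\Delta$ on each simple $P_s$. Since the $\BB$-action categorifies the Tits representation of $\WW$ and $\Delta$ projects to the longest element $w_0 \in \WW$, the class $[\Sigma_\Delta(P_s)]$ in $K_0(\cT)$ must equal $-[P_{\iota(s)}]$, where $\iota$ is the involution induced by $w_0$. A direct check in any rank-one parabolic subsystem (or the classical Khovanov--Seidel/Brav--Thomas computation in type $A$) pins down the cohomological shift, yielding $\Sigma_\Delta(P_s) \cong P_{\iota(s)}[1]$. In non-simply-laced type, the analogous computation is carried out within the simply-laced unfolding $\widetilde{\Gamma}$, using the $\cC$-equivariance of the $\BB_{\widetilde{\Gamma}}$-action on $\cT_{\widetilde{\Gamma}}$ to reduce to the simply-laced case. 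Applying $\Sigma_\Delta$ once more then gives $\Sigma_\Theta(P_s) = \Sigma_\Delta^2(P_s) \cong P_s[2]$ for every simple $P_s$.

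To upgrade this to a functorial isomorphism $\Sigma_\Theta \cong [2]$, consider the triangulated autoequivalence $F \coloneqq \Sigma_\Theta \circ [-2]$. Then $F$ acts as the identity on $K_0(\cT)$ (because $\Theta$ maps to $e$ in $\WW$), commutes with every Seidel--Thomas twist $\Sigma_{\sigma_s}$ up to isomorphism (because $\Theta$ is central in $\BB$), and satisfies $F(P_s) \cong P_s$ for each simple. Its restriction to $\heartsuit_\std$ is thus a $K_0$-trivial exact autoequivalence of an abelian category fixing every simple object; combined with the reconstruction of $\cT$ as the derived category of the zigzag dg-algebra $\bigoplus_{s,t} \hom^\bullet(P_s, P_t)$, this rigidity forces $F \cong \Id$, hence $\Sigma_\Theta \cong [2]$.

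I expect the main obstacle to be this final rigidity step: upgrading the pointwise isomorphisms $F(P_s) \cong P_s$ to a \emph{natural} isomorphism $F \cong \Id$ of functors. The key input is that centrality of $\Theta$ in $\BB$ forces $F$ to preserve all Hom-spaces between the spherical generators compatibly with the algebra structure (up to inner automorphism); together with the fact that the $P_s$ generate $\cT$ as a triangulated category, this should be enough to assemble the object-wise identifications into a natural isomorphism $F \cong \Id$.
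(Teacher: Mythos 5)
Your route is genuinely different from the paper's. The paper reduces the simply-laced case to a citation: by \cite{HLLY} the full twist acts on the categorified zero weight space of the adjoint representation by $[2n]\langle 2n-2\rangle$ (with $n$ the height of the highest root), and passing to the 2-CY quotient, where $X[1]$ is identified with $X\langle -1\rangle$, collapses this to $[2]$; the non-simply-laced case is then handled by the unfolding homomorphism $\psi:\BB\to\BB_{\widetilde{\Gamma}}$ together with a check that $\WW$ and $\WW_{\widetilde{\Gamma}}$ have the same Coxeter number. You instead propose to compute $\Sigma_\Delta$ on the spherical generators and then invoke rigidity. The second half of your plan is fine, and in fact the step you flag as the main obstacle is the easy one: the implication ``$\Sigma_\Theta(P_s)\cong P_s[2]$ for all $s$ implies $\Sigma_\Theta\cong[2]$'' is exactly Lemma \ref{lem:rigid2} (\cite[Proposition 7.26]{HL24}), which the paper uses as a black box elsewhere, so you need not reprove it via reconstruction from the zigzag dg-algebra (and your sketch of that reconstruction argument is itself too optimistic: an autoequivalence fixing all simples and acting trivially on $K_0$ is not automatically isomorphic to the identity).

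The genuine gap is in your first step, the claim $\Sigma_\Delta(P_s)\cong P_{\iota(s)}[1]$. The $K_0$ computation only gives $[\Sigma_\Delta(P_s)]=-[P_{\iota(s)}]$; this does not determine the object (a priori some other spherical object could represent the same class), and even granting that $\Sigma_\Delta(P_s)$ is a shift of $P_{\iota(s)}$, the sign only determines the homological shift modulo $2$ -- and the parity-2 ambiguity is precisely the content of the proposition, since it is the difference between $\Theta\cong[2]$ and $\Theta\cong[0]$ or $[4]$. A ``direct check in a rank-one parabolic'' computes $\Sigma_{\sigma_s}(P_s)\cong P_s[\pm 1]$, but $\Delta$ is not assembled from commuting rank-one half-twists, so this does not pin down the shift of $\Sigma_\Delta(P_s)$ in higher rank. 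The statement you want is true and can be established (for instance, conjugation $\Delta\sigma_s\Delta^{-1}=\sigma_{\iota(s)}$ gives $\Sigma_\Delta(P_s)\cong P_{\iota(s)}[m_s]$ once one knows spherical twists determine their spherical objects up to shift, and a $t$-exactness argument for the positive lift $\Delta$ then fixes $m_s$ uniformly), but as written the crucial shift is asserted rather than proved; this is exactly the computation the paper outsources to \cite{HLLY}.
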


\begin{proof}
First suppose that $\Gamma$ is simply-laced.  In \cite{HLLY}, the authors study a triangulated category $\cK^b(A-\mathrm{pmod})$, which categorifies the zero weight space in the adjoint representation of the corresponding quantum group.  If $n$ is the height of the highest root, they show that $\Theta$ acts on $\cK^b(A-\mathrm{pmod})$ by $[2n]\langle 2n-2 \rangle $, where the brackets $\langle \bullet \rangle$ denote internal grading shift and $[\bullet ]$ denotes homological shift (see \cite[Theorems 6.4 \& 7.7 and Lemma 7.4]{HLLY}).
The 2-Calabi-Yau category $\cT$ is not exactly the category $\cK^b(A-\mathrm{pmod})$, but rather is a quotient of it, essentially given by declaring isomorphic the objects $X[1]$ and $ X\langle -1 \rangle$ (see \cite[Section 2.3.3]{BDL2} for the precise construction).  In particular, it follows immediately that in simply-laced type the full twist acts on $\cT$ as shift by two.

Now let $\Gamma$ be any finite-type non-simply-laced (i.e.\ non ADE) Coxeter diagram and $
\widetilde{\Gamma}$ be its unfolding. 
By \cite[Proposition 8.9, Corollary 8.10]{HL24}, the action of $\BB$ on $\cT$ factors through the action of $\BB_{\widetilde{\Gamma}}$ via an injective group homomorphism $\psi: \BB \to \BB_{\widetilde{\Gamma}}$ which sends the positive lift of any Coxeter element of $\BB$ to the positive lift of a Coxeter element of $\BB_{\widetilde{\Gamma}}$ (see \cite{Crisp_LCM} and \cite[\S 8.1]{HL24}).  Since positive lifts of Coxeter elements are $h$-th roots of the full twist, where $h$ is the Coxeter number, it therefore suffices to show that $\WW$ and $\WW_{\widetilde{\Gamma}}$ have the same Coxeter number. 
Indeed, if $\beta_0 \in \BB$ and $\widetilde{\beta}_0 \coloneqq \psi(\beta_0) \in \BB_{\widetilde{\Gamma}}$ are positive lifts of Coxeter elements, then having the same Coxeter number will imply that $\beta_0^h \xmapsto{\psi} \widetilde{\beta}_0^h \cong [2]$, where the isomorphism follows from the simply-laced case above.

The fact that $\WW$ and $\WW_{\widetilde{\Gamma}}$ have the same Coxeter number can be done via a case-by-case analysis, which is listed in \cite[Fig.\ 2]{Heng24} (another proof of this agreement of Coxeter numbers can be found in \cite[Theorem 6.9]{EH_fusionquiver}).
\end{proof}

\subsection{Fusion-equivariant stability conditions on \texorpdfstring{$\cT$}{T}}

In this section we collect some essential properties of the action of $\BB$ on the triangulated category $\cT$ and on its associated stability manifold.  In simply-laced type, the results herein are due to Thomas in type A \cite{Thomas06} and Bridgeland in types ADE \cite{Bridgeland09}. For more general Coxter systems they appear in work of the first two authors \cite{HL24}. 

Recall that a \textbf{(Bridgeland) stability condition} $\tau \coloneqq (\cP,Z)$ on $\cT$ consists of a slicing $\cP$ and a central charge $Z$. 
The slicing $\cP=(\cP(\phi))_{\phi \in \RR}$ is a one-parameter family of additive subcategories $\cP(\phi) \subset \cT$, and the objects in $\cP(\phi)$ are referred to as semistable objects of phase $\phi$. 
The central charge $Z\in \hom_\ZZ(K_0(\cT),\CC)$ is a group homomorphism. 
This datum satisfies several requirements, the most important of which is a compatibility between the slicing and central charge.  We refer the reader to \cite{bridgeland_2007} for the precise definition, and mention only the facts we will need here. 

The slicing $\cP$ in a stability condition is a refinement of the notion of a t-structure on $\cT$. More precisely, the category $\cP(0,1]$, defined as the extension-closure of the semistable objects of phase $\phi$ for $0<\phi\leq1$, is always the heart of a $t$-structure.

Let $\Stab(\cT)$ denote the moduli of stability conditions associated to $\cT$.  By Bridgeland's Theorem \cite{bridgeland_2007}, this is naturally a complex manifold where the local charts are provided by the map taking a stability condition to its central charge. It can be difficult to say much about the stability manifold for arbitrary triangulated categories.  However, in the specific case of interest here, $\Stab(\cT)$ is fairly well understood.  In particular, it is known to be connected \cite{BDL2}.  Moreover, it contains a closed submanifold $\Stab_\cC(\cT)$, which is a distinguished connected component of the space of $\cC$-equivariant stability conditions (\cite{DHL}).  Here a $\cC$-equivariant stability condition $(\cP,Z)$ is a stability condition such that 
\begin{itemize}
    \item each semistable subcategory $\cP(\phi)$ is preseved by the action of the fusion category $\cC$, and
    \item the central charge $Z$ is $K_0(\cC)$ equivariant, i.e.\ $Z \in \Hom_{K_0(\cC)}(K_0(\cT),\CC).$
\end{itemize}
We note again that if $\Gamma$ is ADE, so that $\cC=\vec$, then $\Stab_\cC(\cT)=\Stab(\cT)$ is the entire stability manifold, so, at least in simply-laced type, one can ignore any of the additional structure coming from the action of the fusion category.

An autoequivalence $\Sigma$ of $\cT$ induces a homeomorphism of $\Stab(\cT)$, denoted $g_\Sigma$, and defined as follows: for $\tau=(\cP,Z)$ set $g_\Sigma(\tau)=(\cP',Z')$, where $\cP'(\phi)\coloneqq \Sigma(\cP(\phi))$ and $Z'([X]) \coloneqq Z([\Sigma^{-1}(X)]$.
If $\Sigma$ is an autoequivalence of $\cT$ as a $\cC$-module (i.e. $\Sigma$ commutes with the action of the fusion category), then the submanifold $\Stab_\cC(\cT)$ is preserved by $g_\Sigma$.
When the autoequivalence $\Sigma = \Sigma_\beta$ comes from an element $\beta\in \BB$, we write $g_\beta \coloneqq g_{\Sigma_\beta}$ or simply $\beta\cdot\tau$ for the stability condition $g_\beta(\tau)$. 

The manifold $\Stab_\cC(\cT)$ also has a natural $\CC$-action, which translates the slicing and scales the central charge: for $z = a+\pi i b \in \CC$ and $\tau=(\cP,Z)$ define $z\cdot \tau=(\cP',Z')$, where $\cP'(\phi)=\cP(\phi-b)$ and $Z'=e^zZ$. Note this action commutes with all autoequivalences, i.e. $g_\Sigma(z \cdot \tau) = z\cdot g_\Sigma(\tau)$ for any autoequivalence $\Sigma$. In particular, $\BB$ acts on the quotient space $\Stab_\cC(\cT)/\CC$. 

We summarise the results about stability conditions on $\cT$ that we need in the following:
\begin{Theorem}[\cites{HL24,BravThomas,Bridgeland09}]\label{thm:HLmain}
We have the following:
\begin{enumerate}
    \item $\Stab_\cC(\cT)$ is the universal cover of $\cH/\WW$.
    The covering map $\pi$ is given by sending a $\cC$-equivariant stability condition $\tau=(\cP,Z)$ to $\ol{Z}$.
    \item The group $\BB$ acts faithfully on $\cT$, with the Artin generators acting by fusion-spherical twists.   The induced action of $\BB$ on $\Stab_\cC(\cT)$ is also faithful, and $\beta \mapsto g_\beta$ identifies $\BB$ with the group of deck transformations $\Deck(\pi)$.
    \item The subset of $\cC$-equivariant stability conditions $(\cP,Z)$ such that $\cP(0,1] = \heartsuit_\std$ forms a fundamental domain for the $\BB$-action on $\Stab_\cC(\cT)$.
\end{enumerate}
\end{Theorem}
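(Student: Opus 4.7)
The plan is to establish the three parts in order, treating the topological content of part (1) as the principal input and deducing the rest.

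For part (1), I would first verify that the central-charge map $\pi$ lands in $\cH/\WW$: every semistable object in a stability condition has nonzero central charge, and as one varies the stability condition the classes of simples of hearts in the $\BB$-orbit of $\heartsuit_\std$ realise all roots up to sign, so $Z$ vanishes on no root hyperplane. Bridgeland's deformation theorem gives that the forgetful map to central charges is a local homeomorphism on $\Stab(\cT)$; restricting to the $\cC$-equivariant subspace and composing with $V_\CC' \twoheadrightarrow V_\CC'/\WW$ yields the local homeomorphism $\pi: \Stab_\cC(\cT) \to \cH/\WW$. Upgrading this to a covering map and proving simple connectedness is the heart of the matter. The strategy, following Bridgeland and Brav--Thomas in ADE type and \cite{HL24} in general, is to produce an explicit fundamental domain $U$ (the locus of stability conditions with heart $\heartsuit_\std$), show its interior maps homeomorphically onto a complexified Weyl chamber modulo $\WW$, and show that its $\BB$-translates tile $\Stab_\cC(\cT)$ with adjacencies induced by simple tilts corresponding to Artin generators. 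A van Kampen argument then yields simple connectedness.

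For part (2), the faithfulness of the $\BB$-action on $\cT$ is the content of the main theorem of Brav--Thomas in simply-laced type and of \cite{HL24} in general, and the description of the Artin generators as fusion-spherical twists is built into the construction of $\cT$. Given faithfulness on $\cT$, faithfulness on $\Stab_\cC(\cT)$ follows because any $\Sigma_\beta$ fixing a stability condition with heart $\heartsuit_\std$ must preserve $\heartsuit_\std$ and act trivially on its simples, hence be isomorphic to $\Id$, forcing $\beta = 1$. The $\BB$-action commutes with $\pi$ up to the $\WW$-action on central charges, giving an injection $\BB \injects \Deck(\pi)$; combined with van der Lek's identification $\pi_1(\cH/\WW) \cong \BB$ and the covering statement from (1), this injection is forced to be an isomorphism.

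For part (3), $\cC$-equivariant stability conditions with heart $\heartsuit_\std$ are parametrised, via their values on the simples $P_s$, by tuples in the strict upper half-plane subject to the fusion-equivariance constraint, giving an open, path-connected region $U$ whose image under $\pi$ is the complexified fundamental Weyl chamber modulo $\WW$. That $U$ is a fundamental domain for the $\BB$-action then follows from the covering description in (1): distinct $\BB$-translates have disjoint interiors, and every stability condition is $\BB$-equivalent to one in $\ol{U}$ because any heart reachable in $\Stab_\cC(\cT)$ is obtained from $\heartsuit_\std$ by a finite sequence of simple tilts. The main obstacle throughout is the simple-connectedness portion of (1), which requires matching the combinatorics of the tilting graph of $\BB$-translates of $\heartsuit_\std$ with the universal cover of the hyperplane complement; this is where the bulk of the work in \cite{Bridgeland09, BravThomas, HL24} is spent.
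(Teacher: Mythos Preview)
The paper does not prove this theorem; it is stated as a summary of results from the cited references \cite{HL24,BravThomas,Bridgeland09}, with no argument given in the text beyond the attribution. Your outline is a faithful high-level sketch of how those references proceed (the fundamental-domain-and-tiling argument for the covering and simple-connectedness, faithfulness of the categorical action, and the identification of deck transformations via van der Lek's $\pi_1(\cH/\WW)\cong\BB$), so there is nothing in this paper to compare against.
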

In particular, the fundamental domain of the $\WW$-action on $\cH$ can be given by the subset of central charges whose image $Z(\alpha_s)$ for all simple roots $\alpha_s$ lies in $\HH \cup \RR_{<0}$, the strict upper half plane union the negative real line.


The deck-transformation action of $\pi_1(\cH/\WW,\ol{Z})$ on its universal cover depends on the choice of lift of the base point $\ol{Z}$.  In what follows we fix this lift to be the unique stability condition $(\cP,Z) \in \pi^{-1}(\ol{Z})$ whose heart $\cP(0,1]$ is $\heartsuit_\std$.
For $\lambda \in \pi_1(\cH/\WW,\ol{Z})$, we let $f_\lambda$ denote the corresponding deck transformation. (Sometimes, we'll  simply write $\lambda\cdot\tau$ for the action of $\lambda$ on a stability condition.)   Thus we have the following isomorphisms:
\begin{alignat}{3}\label{eq:deckiso}
    \BB &\xrightarrow{\cong} \Deck(\pi) &&\xleftarrow{\cong} \pi_1(\cH/\WW,\ol{Z}) \\
    \beta &\mapsto g_\beta, \ \ f_\lambda &&\mapsfrom \lambda,
\end{alignat}
the composition of which gives, for any choice of $Z$, an explicit isomorphism $\BB \cong \pi_1(\cH/\WW,\ol{Z})$.
For the following lemma recall that $\Theta \in \BB$ is the full-twist and $\vartheta \in \pi_1(\cH/\WW,\ol{Z})$ is defined in \eqref{eq:vartheta}.

\begin{Lemma}\label{lem:thetas}
Under the isomorphism \eqref{eq:deckiso} we have that $\Theta \mapsto \vartheta$.
\end{Lemma}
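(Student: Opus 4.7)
The plan is to prove the lemma by evaluating both the deck transformation $f_\vartheta$ and the autoequivalence-induced homeomorphism $g_\Theta$ on the chosen basepoint $\tau_0 = (\heartsuit_\std, Z) \in \pi^{-1}(\overline{Z})$, and showing they agree. Since $\BB$ acts freely on $\Stab_\cC(\cT)$ by deck transformations (Theorem \ref{thm:HLmain}(2)), any element of $\Deck(\pi)$ is determined by its value at a single point, so matching the two at $\tau_0$ will suffice.

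For $g_\Theta(\tau_0)$, I would invoke Proposition \ref{lem:shift2}: since $\Sigma_\Theta$ is isomorphic to the shift $[2]$, the induced action sends $\tau_0$ to the stability condition with heart $\heartsuit_\std[2]$ and central charge $Z$ (the central charge is unchanged because $[2]$ acts trivially on $K_0(\cT)$). Next, to compute $f_\vartheta(\tau_0)$, I would lift $\vartheta$ through $\tau_0$ using the $\CC$-action on $\Stab_\cC(\cT)$. The key observation is that the covering map $\pi$ intertwines the $\CC$-action on $\Stab_\cC(\cT)$ with the scalar $\CC^*$-action on $\cH/\WW$ via the exponential, i.e. $\pi(z \cdot \tau) = e^z \pi(\tau)$. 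Consequently, the path $\widetilde{\vartheta}(t) := (2\pi i t) \cdot \tau_0$ is a lift of $\vartheta(t) = e^{2\pi i t}\overline{Z}$ starting at $\tau_0$, so $f_\vartheta(\tau_0) = \widetilde{\vartheta}(1) = (2\pi i) \cdot \tau_0$. Unpacking the explicit formula for the $\CC$-action shows that this stability condition also has central charge $e^{2\pi i}Z = Z$ and its heart is the standard heart shifted by two, so $f_\vartheta(\tau_0) = g_\Theta(\tau_0)$.

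Combining the two computations with freeness of the deck action yields $f_\vartheta = g_\Theta$, which under \eqref{eq:deckiso} is exactly the statement $\Theta \mapsto \vartheta$. The main point of care is tracking sign conventions across the three separate identifications: how a shift autoequivalence $[n]$ is realised inside the $\CC$-action parametrisation of the excerpt; the direction of the $\pi_1$-to-$\Deck$ isomorphism (whether $f_\lambda(\tau_0) = \widetilde{\lambda}(1)$ or $\widetilde{\lambda^{-1}}(1)$); and the sign in the relation between slicings and central charges. Once these are aligned---a check that is essentially determined by a single rank-one case such as $A_1$, where $\Theta = \sigma^2$ and one can verify the statement directly from the Seidel--Thomas twist---the general proof is precisely the two-line evaluation above.
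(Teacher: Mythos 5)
Your proposal is correct and follows essentially the same route as the paper: the paper's proof also lifts $\vartheta$ to the path $t \mapsto (2\pi i t)\cdot \tau_0$ via the compatibility $\pi(z\cdot\tau)=e^z\pi(\tau)$, identifies the endpoint with $g_{[2]}(\tau_0)$, and invokes Proposition \ref{lem:shift2} to conclude $f_\vartheta = g_{[2]} = g_\Theta$. Your extra care about sign conventions and the rank-one sanity check is reasonable but not a departure from the paper's argument.
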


\begin{proof}
Since $\BB$ acts faithfully on $\cT$, it suffices to show that $f_\vartheta = g_\Theta$. 
To prove this equality, note $\vartheta$ lifts to the path $\tilde{\vartheta}$ in $\Stab(\cT)$ given by $t \mapsto (2\pi i t) \cdot \tau_0$.  Since $\tilde{\vartheta}(1)=g_{[2]}(\tau_0)$, we have that $f_\vartheta = g_{[2]}$. On the other hand, by Proposition \ref{lem:shift2} the full-twist autoequivalence $\Theta$ acts on $\cT$ as shift by two.
\end{proof}


In order to check whether or not a braid $\beta\in \BB$ acts by the identity functor on $\cT$, it is sufficient to check that $\Sigma_\beta(P_s) \cong P_s$ for each of the generating spherical objects $P_s$.  (Using the faithfulness of the action of $\BB$ on $\cT$, this fact gives an algorithm for solving the word problem in $\BB$: acting on each of the $P_s$ in turn and check whether or not the result is isomorphic to $P_s$.)  Like the identity functor, other shifts $[a]$ can be detected by checking the action of $\beta$ on generating objects.  We recall this fact for later use here.

\begin{Lemma}[\protect{\cite[Proposition 7.26]{HL24}}]\label{lem:rigid2}
Let $\beta \in \BB$. If there exists $a\in \ZZ$ such that for every $s \in \Gamma_0$, $\Sigma_\beta(P_s) \cong P_s[a]$, then $\Sigma_\beta\cong [a]$.   
\end{Lemma}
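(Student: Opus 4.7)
The plan is to compare $g_\beta$ with $g_{[a]}$ by evaluating both at the base stability condition $\tau_0 = (\heartsuit_\std, Z_0)$ (the distinguished lift used in Lemma~\ref{lem:thetas}) and to invoke the identification of $\BB$ with the deck transformation group of $\pi : \Stab_\cC(\cT) \to \cH/\WW$ provided by Theorem~\ref{thm:HLmain}. First I would compute $g_\beta(\tau_0)$. The hypothesis $\Sigma_\beta(P_s) \cong P_s[a]$ implies, by taking extension closures, that $\Sigma_\beta(\heartsuit_\std) = \heartsuit_\std[a]$. The induced central charge $Z_0 \circ \Sigma_\beta^{-1}$ evaluated on $[P_s]$ yields $(-1)^a Z_0([P_s])$, since $\Sigma_\beta[P_s] = (-1)^a[P_s]$ in $K_0(\cT)$. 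A direct check against the $\CC$-action formula then identifies $g_\beta(\tau_0) = -a\pi i \cdot \tau_0$.

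In the even case $a = 2k$, Proposition~\ref{lem:shift2} gives $\Sigma_{\Theta^k} \cong [2k]$, and the same computation yields $g_{\Theta^k}(\tau_0) = -a\pi i \cdot \tau_0 = g_\beta(\tau_0)$. Both $g_\beta$ and $g_{\Theta^k}$ are deck transformations of $\pi$, and deck transformations of a connected covering space that agree at a single point must agree everywhere. Faithfulness of the $\BB$-action (Theorem~\ref{thm:HLmain}(2)) then yields $\beta = \Theta^k$ in $\BB$, whence $\Sigma_\beta \cong \Sigma_{\Theta^k} \cong [2k] = [a]$.

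For the odd case $a = 2k+1$, note that $\Sigma_\beta$ acts on $K_0(\cT)$ as $(-1)^a \Id = -\Id$, so the image $w(\beta) \in \WW$ satisfies $w(\beta) = -\Id$ on $V_\RR$; this forces $-\Id \in \WW$, so $w_0 = -\Id$, $\iota = \Id$, and $\Delta$ is central in $\BB$. Applying the even case to $\beta^2$ (which satisfies the hypothesis with even shift $2a$) gives $\beta^2 = \Theta^a = \Delta^{2a}$. Since $\BB$ is torsion-free, square roots are unique, and $\Delta^a$ is itself a square root of $\Delta^{2a}$; therefore $\beta = \Delta^a$. Granting that $\Sigma_\Delta \cong [1]$ holds in all types with $w_0 = -\Id$ (established by direct type-by-type verification, consistent with $\Sigma_\Delta^2 \cong \Sigma_\Theta \cong [2]$), one concludes $\Sigma_\beta = \Sigma_\Delta^a \cong [a]$.

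The main technical obstacle is the odd case, and in particular the assertion $\Sigma_\Delta \cong [1]$. The shift $[1]$ is generally not in the image of $\BB \to \Aut(\cT)$, so the clean ``two deck transformations agreeing at a point'' argument from the even case does not apply directly. The $\beta^2$-reduction combined with uniqueness of square roots in $\BB$ isolates the remaining problem to this single statement about $\Delta$, which must then be checked by a separate, type-specific computation.
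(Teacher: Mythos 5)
The paper does not actually prove this lemma: it is imported from \cite[Proposition 7.26]{HL24}, where it is established by a direct categorical argument (roughly, $\Sigma_\beta[-a]$ is a t-exact autoequivalence fixing every simple object of the finite-length heart $\heartsuit_\std$, and such an autoequivalence is shown to be isomorphic to the identity), so your proposal must be judged on its own terms. Your even case is a genuinely nice argument --- compute $g_\beta(\tau_0)$, match it against $g_{\Theta^k}(\tau_0)$, and use that deck transformations agreeing at one point coincide --- but it carries a structural circularity risk: Theorem \ref{thm:HLmain}, the covering-space description of $\Stab_\cC(\cT)$ and the identification $\BB\cong\Deck(\pi)$, is itself proved in \cite{HL24} with rigidity statements of exactly this kind as ingredients (the paper's Remark \ref{rem:direct2} treats Lemma \ref{lem:rigid2} and faithfulness as the two basic categorical inputs to everything else). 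Deriving Proposition 7.26 of \cite{HL24} from the main theorem of \cite{HL24} is not an independent proof.

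The more concrete gap is the odd case, which as you note does not close. You correctly reduce it to the assertion $\Sigma_\Delta\cong[1]$ in the types with $w_0=-\Id$, but the only information available about $\Sigma_\Delta$ at that point is its effect on the generating objects, namely $\Sigma_\Delta(P_s)\cong P_s[1]$; passing from that to $\Sigma_\Delta\cong[1]$ is precisely the $a=1$ instance of the lemma being proved, so the promised ``separate, type-specific computation'' is the entire content of the statement rather than a checkable side fact. Two smaller repairs: the principle ``square roots are unique in torsion-free groups'' is false in general (in type $A_2$ the conjugates of $\Delta$ give infinitely many distinct square roots of the central element $\Theta$); your conclusion $\beta=\Delta^a$ is still salvageable, but only because $\Delta^a$ is central there, so $(\beta\Delta^{-a})^2=e$ and torsion-freeness forces $\beta=\Delta^a$. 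And in the even case, the identification $g_\beta(\tau_0)=-a\pi i\cdot\tau_0$ needs the (standard, but worth stating) fact that a stability condition is determined by its heart $\cP(0,1]$ together with the central charge, since the hypothesis only controls $\Sigma_\beta$ on the heart, not on each slice $\cP(\phi)$ individually.
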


\subsection{The \texorpdfstring{$\CC$}{C}-action on \texorpdfstring{$\cT$}{T} and t-exactness}

Theorem \ref{thm:main2} concerns elements of $\BB$ that have a a fixed point in this space, i.e. $\beta \in \BB$ such that $\beta \cdot \tau = z \cdot \tau$ for some $\tau \in \Stab_\cC(\cT)$ and $z\in\CC$ . We explain how to interpret these fixed points from a homological point of view. Let us first recall a classical notion:

\begin{Definition}\label{defn:t-exact}
    Let $\Psi: \cT \to \cT$ be an exact endofunctor and fix a heart $\heartsuit$ on $\cT$.
    The functor $\Psi$ is said to be \textit{t-exact} (with respect to $\heartsuit$) if $\Psi(\heartsuit) \subseteq \heartsuit$; it is \textit{t-exact up to a shift $m \in \ZZ$} if $\Psi(\heartsuit) \subseteq \heartsuit[m]$.
\end{Definition}

Now suppose that $\Psi\cdot \tau =z\cdot \tau$ for some $\tau \in \Stab_\cC(\cT)$ and $z=-m\pi i, m \in \ZZ$. Then $\Psi (\cP(0,1]) = \cP(m,m+1]$, and hence $\Psi$ is t-exact up to shift $m$ with respect to the standard t-structure induced by $\cP$. From this perspective, a functor fixing a point in $\Stab_\cC(\cT)/\CC$ is a refinement of the notion of t-exactness.



\begin{example}
Let $\Gamma = A_3$. We consider some examples of autoequivalences scaling a stability condition arising from the action of $\BB$ on $\Stab(\cT)=\Stab_\cC(\cT)$. 
\begin{enumerate}
    \item The central element $\Theta$ acts by $[2]$ on $\cT$, and hence acts by $2\pi i \in \CC$ on $\Stab_\cC(\cT)$.  Thus all points of $\Stab_\cC(\cT)/\CC$ are fixed by $\Theta$.
    \item Consider first the periodic element $\beta \coloneqq \sigma_1\sigma_2\sigma_3\sigma_1$  in $\BB$.
    Define a stability condition $\tau \in \Stab(\cT)$ as in Figure \ref{fig:centralchargefixpt} (left).  In this stability condition the generating objects $P_i$ are themselves stable, and we have drawn the central charge in the picture, putting the object $P_i$ on the corresponding central charge vector.  In this way the labelling in the figure defines both the slicing and the central charge. Note that the associated heart $\cP(0,1]$ of $\tau$ is $\heartsuit_\std$, since $Z([P_i]) \in \HH \cup \RR_{<0}$ for $i=1,2,3$. (The unlabelled vectors are the central charges of other stable objects.)
    
    A straightforward computation \cite[Section 2.7]{LQ} shows that $\beta$ acts on $\tau$ by the complex number $\frac{2\pi i}{3}$ as depicted in Figure \ref{fig:centralchargefixpt} (right).
    In other words, $\beta \cdot \tau = \frac{2\pi i}{3}\cdot \tau$ and $\tau \in \Stab(\cT)/\CC$ is a fixed point of $\beta$.
    \item Now consider the periodic element $\beta'\coloneqq \sigma_1\sigma_2\sigma_3$ in $\BB$, and the stability condition given by Figure \ref{fig:centralchargefixpt2} (left).  In this case, the heart $\cP(0,1]$ of $\tau$ is given by $\sigma_3^{-1}(\heartsuit_\std)$ and the action of $\beta'$ rotates $\tau$ by $\frac{\pi}{2}$ as in Figure \ref{fig:centralchargefixpt2} (right), i.e. $\beta'\cdot\tau' = \frac{\pi i}{2}\cdot\tau'$.
\end{enumerate}
In both examples the element in $\BB$ rotates the stability condition by a scalar of the form $x \pi i$, for some $x \in \QQ$. This is a general phenomenon (Lemma \ref{lem:rotaterational}).
Note also that $\beta^3 = {\beta'}^4 = \Theta$ is the full-twist, which acts on $\cT$ by the triangulated shift $[2]$ (Proposition \ref{lem:shift2}). In other words, a power of these elements is t-exact up to shift. This is also a general phenomenon: those autoequivalences which scale stability conditions in $\Stab_\cC(\cT)$ are \textit{roots of t-exact autoequivalences} (cf. Section \ref{sec:backwards}).
\begin{figure}
    \centering
    \begin{tikzpicture}[scale = 1.5]
    \coordinate (Origin)   at (0,0);
    \coordinate (XAxisMin) at (-1.4,0);
    \coordinate (XAxisMax) at (1.4,0);
    \coordinate (YAxisMin) at (0,-1.6);
    \coordinate (YAxisMax) at (0,1.6);

    \draw [thin, gray!30,-latex] (XAxisMin) -- (XAxisMax);
    \draw [thin, gray!30,-latex] (YAxisMin) -- (YAxisMax);

    \foreach \i in {0, 60, ..., 300} {
	   \draw[blue, thick, ->] (Origin) -- ++(\i:1);  
    }

    \foreach \i in {30, 90, ..., 330} {
	   \draw[blue, thick, ->] (Origin) -- ++(\i:1.6);
    }

    \node (a1) at ($(Origin) + (120:1.2)$)
        {$P_1$};
    \node (a2) at ($(Origin) + (180:1.2)$)
        {$P_2$};
    \node (a3) at ($(Origin) + (30:1.8)$)
        {$P_3$};

    \node (map) at (2.5,0) {$\xmapsto{\sigma_1\sigma_2\sigma_3\sigma_1}$};
    
    \end{tikzpicture}
    \begin{tikzpicture}[scale = 1.5]
    \coordinate (Origin)   at (0,0);
    \coordinate (XAxisMin) at (-1.4,0);
    \coordinate (XAxisMax) at (1.4,0);
    \coordinate (YAxisMin) at (0,-1.6);
    \coordinate (YAxisMax) at (0,1.6);

    \draw [thin, gray!30,-latex] (XAxisMin) -- (XAxisMax);
    \draw [thin, gray!30,-latex] (YAxisMin) -- (YAxisMax);
    	
	\begin{scope}[rotate=120]
    \foreach \i in {0, 60, ..., 300} {
	   \draw[blue, thick, ->] (Origin) -- ++(\i:1);  
    }

    \foreach \i in {30, 90, ..., 330} {
	   \draw[blue, thick, ->] (Origin) -- ++(\i:1.6);
    }

    \node (a1) at ($(Origin) + (120:1.2)$)
        {$P_1$};
    \node (a2) at ($(Origin) + (180:1.2)$)
        {$P_2$};
    \node (a3) at ($(Origin) + (30:1.9)$)
        {$P_3$};
	    
	\end{scope}
	
    \end{tikzpicture}
    \caption{Central charge of the stable objects $P_s$ and their action under $\beta =\sigma_1\sigma_2\sigma_3\sigma_1$.}
    \label{fig:centralchargefixpt}
\end{figure}
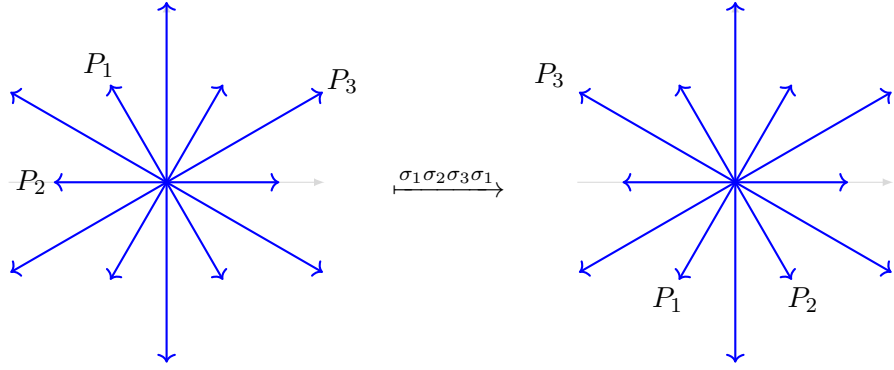

\begin{figure}
    \centering
  \begin{tikzpicture}[scale = 1.5]
    \coordinate (Origin)   at (0,0);
    \coordinate (XAxisMin) at (-1.4,0);
    \coordinate (XAxisMax) at (1.4,0);
    \coordinate (YAxisMin) at (0,-1.4);
    \coordinate (YAxisMax) at (0,1.4);

    \draw [thin, gray!30,-latex] (XAxisMin) -- (XAxisMax);
    \draw [thin, gray!30,-latex] (YAxisMin) -- (YAxisMax);

    \foreach \i in {0, 90, ..., 270} {
	   \draw[blue, thick, ->] (Origin) -- ++(\i:1);  
    }

    \foreach \i in {45, 135, ..., 315} {
	   \draw[blue, thick, ->] (Origin) -- ++(\i:1.414);
    }

    \node (a1) at ($(Origin) + (180:1.2)$)
        {$P_1$};
    \node (a2) at ($(Origin) + (90:1.2)$)
        {$P_2$};
    \node (a3) at ($(Origin) + (0:1.2)$)
        {$P_3$};
    \node (map) at (2.2,0) {$\xmapsto{\sigma_1\sigma_2\sigma_3}$};
  \end{tikzpicture}
  \begin{tikzpicture}[scale = 1.5]
    \coordinate (Origin)   at (0,0);
    \coordinate (XAxisMin) at (-1.4,0);
    \coordinate (XAxisMax) at (1.4,0);
    \coordinate (YAxisMin) at (0,-1.4);
    \coordinate (YAxisMax) at (0,1.4);

    \draw [thin, gray!30,-latex] (XAxisMin) -- (XAxisMax);
    \draw [thin, gray!30,-latex] (YAxisMin) -- (YAxisMax);
    	
	\begin{scope}[rotate=90]
    \foreach \i in {0, 90, ..., 270} {
	   \draw[blue, thick, ->] (Origin) -- ++(\i:1);  
    }

    \foreach \i in {45, 135, ..., 315} {
	   \draw[blue, thick, ->] (Origin) -- ++(\i:1.414);
    }

    \node (a1) at ($(Origin) + (180:1.2)$)
        {$P_1$};
    \node (a2) at ($(Origin) + (90:1.2)$)
        {$P_2$};
    \node (a3) at ($(Origin) + (0:1.2)$)
        {$P_3$};

    \end{scope}
	
  \end{tikzpicture}
    \caption{Central charge of the stable objects $P_s$ and their action under  $\beta'=\sigma_1\sigma_2\sigma_3$.}
    \label{fig:centralchargefixpt2}
\end{figure}
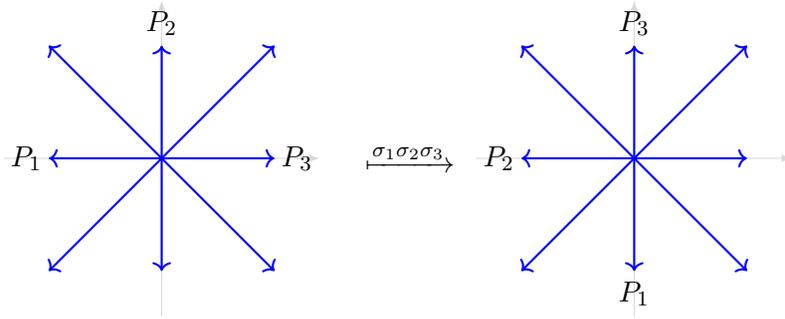
\end{example}



\section{Proof of the main theorem}

This section is devoted to proving Theorem \ref{thm:main2}.

\subsection{Forward implication using Springer theory}

First we prove the forward direction of the theorem:

\begin{Proposition}\label{prop:direct1}
    If $\beta \in \BB$ is periodic then there is a point in $\Stab_\cC(\cT)/\CC$ that it is fixed by $\beta$.
\end{Proposition}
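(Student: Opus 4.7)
The plan is to apply Bessis's Springer theory for braid groups to reduce to the case where $\beta$ is an $n$-th root of the full twist $\Theta$, and then to exhibit a fixed point via a regular eigenvector of the image of $\beta$ in $\WW$, using the description of $\Stab_\cC(\cT)$ as the universal cover of $\cH/\WW$.

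First I would carry out the reduction. The hypothesis $\beta^p = \Delta^q$ gives $\beta^{2p} = \Theta^q$, so there exist $n, m \in \ZZ$ with $n > 0$ and $\beta^n = \Theta^m$. Using that $\BB$ is torsion free and that $\Theta$ is central, we may further assume $\gcd(n,m) = 1$; the case $m = 0$ then forces $n = 1$ and $\beta = e$, which is trivial. So assume $m \neq 0$ and choose $a,b \in \ZZ$ with $an + bm = 1$. Setting $\rho \coloneqq \beta^b \Theta^a$, a direct computation using $\beta^n = \Theta^m$ and centrality of $\Theta$ gives
\[
\rho^n = \Theta \quad \text{and} \quad \rho^m = \beta.
\]
By Theorem \ref{thm:bessis}(1), the integer $n$ is regular for $\WW$ and $\rho$ is an $n$-th root of $\Theta$. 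Since having a fixed point in $\Stab_\cC(\cT)/\CC$ is invariant under both conjugation and under taking powers, the problem reduces to producing such a fixed point for $\rho$.

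Next I would construct the fixed point from Springer theory. After replacing $\rho$ with a suitable conjugate, Theorem \ref{thm:bessis}(3) gives that the image $w \in \WW$ of $\rho$ is $n$-regular with eigenvalue $\zeta_n = e^{2\pi i /n}$; pick a corresponding regular eigenvector $Z \in \cH \subset V_\CC'$, so that $w \cdot Z = \zeta_n Z$. By Theorem \ref{thm:HLmain}(1), choose a lift $\tau_0 \in \Stab_\cC(\cT)$ of $\ol{Z} \in \cH/\WW$. Then both $\rho \cdot \tau_0$ and $(2\pi i/n)\cdot \tau_0$ are lifts of $\ol{Z} = \ol{\zeta_n Z}$, so they differ by a unique deck transformation $\alpha \in \BB$, characterised by $\alpha \cdot \tau_0 = (2\pi i/n)\cdot \tau_0$. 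Via the isomorphism \eqref{eq:deckiso}, $\alpha$ corresponds to the loop $s \mapsto \ol{e^{2\pi i s/n} Z}$ in $\cH/\WW$, whose $n$-th power is $\vartheta$. Lemma \ref{lem:thetas} therefore forces $\alpha^n = \Theta$, and Theorem \ref{thm:bessis}(2) makes $\alpha$ conjugate to $\rho$. Adjusting $\tau_0$ by the conjugating element yields a $\tau \in \Stab_\cC(\cT)$ satisfying $\rho \cdot \tau = (2\pi i /n)\cdot \tau$, and hence
\[
\beta \cdot \tau = \rho^m \cdot \tau = \tfrac{2\pi i m}{n}\cdot \tau,
\]
which provides the required fixed point in $\Stab_\cC(\cT)/\CC$.

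The main obstacle is identifying the deck transformation $\alpha$ attached to the small rotation loop at $\ol{Z}$; the trick is to raise to the $n$-th power, match the result with $\Theta$ via Lemma \ref{lem:thetas}, and then invoke Bessis's single-conjugacy-class statement for $n$-th roots of $\Theta$. As anticipated by Remark \ref{rem:direct1}, the argument uses only the universal-cover description of $\Stab_\cC(\cT)$ and the identification $\Theta \leftrightarrow \vartheta$; the interpretation of points of $\Stab_\cC(\cT)$ as actual stability conditions is inessential for this direction.
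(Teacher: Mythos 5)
Your proof is correct, and the core construction coincides with the paper's: use a Springer regular eigenvector $Z$ with $w\cdot Z=\zeta_n Z$ as the central charge, lift $\ol{Z}$ through the universal cover $\pi:\Stab_\cC(\cT)\to\cH/\WW$, identify the deck transformation of the rotation loop $s\mapsto \ol{e^{2\pi is/n}Z}$ as an $n$-th root of $\Theta$ via Lemma \ref{lem:thetas} and faithfulness, and then invoke Bessis's single-conjugacy-class statement to transport the fixed point to the given root. The one place you genuinely diverge is the reduction to roots of the full twist: the paper reduces to \emph{primitive} periodic elements and cites the Lee--Lee theorem that these are roots of $\Theta$, whereas you give a self-contained Bezout argument ($\beta^n=\Theta^m$ with $\gcd(n,m)=1$, then $\rho=\beta^b\Theta^a$ satisfies $\rho^n=\Theta$ and $\rho^m=\beta$). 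This is arguably cleaner and avoids the external citation, but note that it quietly uses torsion-freeness of $\BB$ (to pass from $(\beta^{n'}\Theta^{-m'})^g=e$ to $\beta^{n'}=\Theta^{m'}$), a classical fact for finite type Artin--Tits groups that the paper never states; you should make that input explicit. Two cosmetic points: the mention of $\rho\cdot\tau_0$ as a lift of $\ol{Z}$ is never used (you define $\alpha$ purely from $(2\pi i/n)\cdot\tau_0$), and the appeal to Theorem \ref{thm:bessis}(3) to produce the regular eigenvector could be replaced by classical Springer theory alone, since by that point you already know $n$ is regular.
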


Bessis' Springer theory for braid groups allows us to make an immediate simplification:

\begin{Lemma}\label{lem:dregular}
    If for every regular integer $d$ there exists  a $d$-th root of $\Theta$ that fixes a point in $\Stab_\cC(\cT)/\CC$, then 
    Proposition \ref{prop:direct1} follows.
\end{Lemma}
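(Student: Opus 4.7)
The strategy is to reduce an arbitrary periodic element to a power of some $d$-th root of $\Theta$ via a simple B\'ezout manipulation, and then invoke the hypothesis together with Bessis's Theorem \ref{thm:bessis}. Two basic observations will drive the reduction. First, by Proposition \ref{lem:shift2} the full twist $\Theta$ acts on $\Stab_\cC(\cT)$ by translation by $2\pi i \in \CC$ and hence acts \emph{trivially} on $\Stab_\cC(\cT)/\CC$; in particular, multiplying $\beta$ by any power of $\Theta$ preserves its set of fixed points in the quotient. Second, having a fixed point in $\Stab_\cC(\cT)/\CC$ is a conjugacy invariant, since $\gamma\beta\gamma^{-1}$ fixes $\gamma \cdot \bar{\tau}$ whenever $\beta$ fixes $\bar{\tau}$. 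These two facts mean we may freely modify $\beta$ by powers of $\Theta$ and by conjugation.

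Given periodic $\beta$ with $\beta^p = \Delta^q$, squaring the relation gives $\beta^P = \Theta^Q$ for some $P > 0$. If $Q = 0$, then $\beta$ has finite order and torsion-freeness of $\BB$ forces $\beta = 1$, and the claim is trivial. Otherwise, set $e = \gcd(P,|Q|)$ and write $P = ep_0$, $Q = eq_0$ with $\gcd(p_0, q_0) = 1$ and $p_0 > 0$. Centrality of $\Theta$ makes $(\beta^{p_0}\Theta^{-q_0})^e = 1$, so torsion-freeness yields $\beta^{p_0} = \Theta^{q_0}$, reducing us to the case of coprime exponents.

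Now choose $u, v \in \ZZ$ with $u p_0 + v q_0 = 1$ and set $\gamma := \beta^v \Theta^u$. A routine calculation using only that $\Theta$ is central and $\beta^{p_0} = \Theta^{q_0}$ yields both $\gamma^{p_0} = \Theta$ and $\gamma^{q_0} = \beta$. Thus $\gamma$ is a $p_0$-th root of $\Theta$, which by Theorem \ref{thm:bessis}(1) forces $p_0$ to be a regular integer, and $\beta$ is realised as a power of $\gamma$.

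To finish, apply the hypothesis at $d = p_0$: there is some $p_0$-th root $\rho$ of $\Theta$ fixing a point of $\Stab_\cC(\cT)/\CC$. By Theorem \ref{thm:bessis}(2), $\gamma$ is conjugate to $\rho$ in $\BB$, so the conjugacy-invariance observation supplies a fixed point for $\gamma$, and hence for $\beta = \gamma^{q_0}$. The only ingredient used beyond what has been recalled in the paper is the classical torsion-freeness of finite-type Artin--Tits groups, so I anticipate no substantial obstacle in executing this plan; the main conceptual point is the B\'ezout step which converts a general periodic element into the desired root-of-$\Theta$-to-a-power form.
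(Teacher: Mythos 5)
Your proof is correct, and it reaches the same endpoint as the paper's argument (reduction to a single $d$-th root of $\Theta$ per regular $d$, via Bessis's Theorem \ref{thm:bessis}(1) and (2) together with the conjugation-equivariance and power-stability of fixed points), but it gets there by a genuinely different route in the middle step. The paper first reduces to \emph{primitive} periodic elements, using that a fixed point of $\gamma$ is a fixed point of every power of $\gamma$, and then cites \cite{LL11}*{Theorem 3.14} for the fact that a primitive periodic element is a root of $\Theta$. You instead bypass primitivity entirely: starting from $\beta^{P}=\Theta^{Q}$ you use torsion-freeness of $\BB$ to reduce to coprime exponents, and then the B\'ezout element $\gamma=\beta^{v}\Theta^{u}$ (with $up_0+vq_0=1$) satisfies $\gamma^{p_0}=\Theta$ and $\gamma^{q_0}=\beta$, which is essentially an inline proof of the fact the paper imports from \cite{LL11}. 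What your approach buys is self-containedness at the cost of one extra classical input, torsion-freeness of finite type Artin--Tits groups (which the paper never states, though it follows from the $K(\pi,1)$ property it does cite via \cite{Del72}, or from Garside theory); what the paper's approach buys is brevity by outsourcing the group-theoretic reduction. Two small remarks: your opening observation that $\Theta$ acts trivially on $\Stab_\cC(\cT)/\CC$ is true but is never actually needed in your argument, since you only ever pass fixed points to powers and to conjugates, never from $\beta$ back to $\beta\Theta^{k}$; and you should note that $q_0$ may be negative, which is harmless since a fixed point of $\gamma$ is also a fixed point of $\gamma^{-1}$.
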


\begin{proof}
    Note that if $\beta$ has a fixed point in $\Stab_\cC(\cT)/\CC$ then so does any power of $\beta$. Therefore it suffices to prove Proposition \ref{prop:direct1} for 
    primitive periodic elements (recall $\beta$ is 
    primitive if $\beta=\gamma^k$ implies $k=\pm 1$ for all $\gamma \in \BB$). 
By \cite[Theorem 3.14]{LL11} any primitive periodic element is a root of $\Theta$. 

    
    So now we've reduced the problem to finding a fixed point for $d$-th roots of $\Theta$, where $d$ is any positive integer. 
    By part (i) of Theorem \ref{thm:bessis}, $d$ is regular for $\WW$. Now if $\tau \in \Stab_\cC(\cT)/\CC$ is a fixed point for $\beta$, then $\gamma\cdot\tau$ is a fixed 
    point for $\gamma\beta\gamma^{-1}$. Moreover, by part (ii) of Theorem \ref{thm:bessis}, there is a single conjugacy class of $d$-th roots of $\Theta$ in $\BB$. 
    Hence, it suffices to find, for every regular integer $d$, a fixed point for some $d$-th root of $\Theta$.
\end{proof}

Now fix a regular integer $d$. By Lemma \ref{lem:dregular}, to prove Proposition \ref{prop:direct1} it suffices to find a $d$-th root of $\Theta$ which fixes some stability condition in $\Stab_\cC(\cT)/\CC$.  Let $\zeta_d=e^{\frac{2\pi i}{d}}$. By classical Springer theory, there exists $w \in \WW, Z \in \cH$ such that $w\cdot Z = \zeta_dZ$. The image $\ol{Z} \in \cH/\WW$ of the eigenvector $Z$ provides the central charge for our sought-after stability condition. To lift it to a stability condition, we use that $\pi:\Stab_\cC(\cT) \to \cH/\WW$ is a (universal) cover (see Theorem \ref{thm:HLmain}(1)). In the fiber $\pi^{-1}(\ol{Z})$, we choose the unique stability condition $\tau_0$ whose heart $\cP(0,1]$ is the standard heart $\heartsuit_\std$ (Theorem \ref{thm:HLmain}(3)).  We'll now show that $\tau_0$ is fixed point for a $d$-th root of $\Theta$.  

Bessis introduced the \textit{standard $d$-th root} $\lambda \in \pi_1(\cH/\WW,\ol{Z})$ given by $t \mapsto e^{2\pi i\frac{t}{d}}\ol{Z}$. Note that $\lambda(1)=\zeta_d \ol{Z}= \ol{w\cdot Z}$, so this is indeed a loop in $\cH/\WW$. By Theorem \ref{thm:HLmain}(2), there exists $\beta \in \BB$ such that $f_{\lambda}=g_\beta$. 
By Lemma \ref{lem:thetas} we have that  
$
    f_\vartheta = g_\Theta.
$
Hence, we have that 
    \begin{align*}
        g_{\beta^d}  = f_{\lambda^d} = f_\vartheta = g_\Theta,
    \end{align*}
and since $\BB$ acts faithfully on $\Stab(\cT)$ (Theorem \ref{thm:HLmain}(2)), it follows that $\beta^d = \Theta$.

So $\beta$ is a $d$-th root of $\Theta$, and we'll now show it fixes $\tau_0 \in \Stab_\cC(\cT)/\CC$. 
First, lift $\lambda$ to a path $\tilde{\lambda}$ in $\Stab_\cC(\cT)$ beginning at $\tau_0$, and let $\tau_1$ be its endpoint. 
By construction, $\tau_1 = \lambda\cdot \tau_0 = \beta\cdot \tau_0$. On the other hand, the natural action of $\CC$ on $\Stab_\cC(\cT)$ lifts the action of $\CC^\times$ on $\cH$: $\pi(z\cdot\tau)=e^z\pi(\tau)$. Since all the points on the loop $\lambda$ are in a single $S^1$ orbit, all the stability conditions in the path $\tilde{\lambda}$ from $\tau_0$ to $\tau_1$ are in the same $\mathbb{R}$ orbit. Hence, $\tau_1=z\cdot \tau_0$ for some $z \in \RR$ (in fact, $z=\frac{1}{d}$ but we don't need that here). We've now shown that $\beta\cdot \tau_0 = z\cdot \tau_0$. This completes the proof of Proposition \ref{prop:direct1}.

\begin{Remark}\label{rem:direct1}
Observe that our proof of Proposition \ref{prop:direct1} did not use any intrinsic properties of the space $\Stab_\cC(\cT)$, and indeed this argument would work for any universal cover of $\cH/\WW$. More precisely, if $\pi:Y \to \cH/\WW$ is any universal cover, then $Y$ is naturally endowed with a $\CC$-action since $\CC^\times$ acts on $\cH/\WW$. Now choose any point $\ol{Z} \in \cH/\WW$ and any isomorphism $\BB \to \pi_1(\cH/\WW,\ol{Z})$, thereby endowing $Y$ with a $\BB$-action (which commutes with the $\CC$-action). Then the same proof as above gives: if $\beta$ is periodic then $\beta$ has a fixed point in $Y/\CC$. 
On the other hand, our argument for the other direction of Theorem \ref{thm:main2} uses the modular description of the universal cover as a space of stability conditions (cf.\ Remark \ref{rem:direct2}).
\end{Remark}

\subsection{Backward implication}\label{sec:backwards}

To complete the proof of Theorem \ref{thm:main2} we prove:

\begin{Proposition}\label{prop:direct2}
    If $\beta \in \BB$ has a fixed point in $\tau \in \Stab_\cC(\cT)/\CC$, i.e. $\beta\cdot \tau = z\cdot \tau$ for some $z \in \CC$, then $\beta$ is periodic.
\end{Proposition}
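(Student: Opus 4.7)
The plan is to project the fixed-point equation $\beta \cdot \tau = z \cdot \tau$ from $\Stab_\cC(\cT)$ down to $\cH/\WW$ along the universal cover $\pi$ of Theorem \ref{thm:HLmain}(1), use finiteness of $\WW$ to force $z$ into $2\pi i \QQ$, and then invoke Proposition \ref{lem:shift2} (namely that $\Theta$ acts on $\cT$ as $[2]$) to identify an appropriate power of $\beta$ with a power of $\Theta$.

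In more detail, write $\tau = (\cP,Z)$. Since the $\BB$-action on $\Stab_\cC(\cT)$ is by deck transformations of $\pi$, we have $\pi(\beta \cdot \tau) = \ol{Z}$, while the $\CC$-action scales central charges by $e^{(-)}$, so $\pi(z\cdot\tau) = e^z \ol{Z}$. The fixed-point equation therefore descends to $e^z \ol{Z} = \ol{Z}$ in $\cH/\WW$, which lifts to the existence of some $w \in \WW$ with $w \cdot Z = e^z Z$. Since $\WW$ is finite, $w^k = 1$ for some $k > 0$, and regularity of $Z$ (so in particular $Z\ne 0$) forces $e^{kz} = 1$, hence $z = 2\pi i \, m/k$ for some integer $m$.

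To finish, I iterate using that the $\CC$- and $\BB$-actions commute:
\[
\beta^k \cdot \tau \;=\; (kz)\cdot \tau \;=\; (2\pi i m)\cdot \tau.
\]
By Proposition \ref{lem:shift2} the full twist $\Theta$ acts on $\cT$ as the shift $[2]$, so on $\Stab_\cC(\cT)$ it acts as multiplication by $2\pi i \in \CC$. Thus $\Theta^m \cdot \tau = (2\pi i m)\cdot \tau = \beta^k\cdot \tau$, meaning $\beta^k \Theta^{-m}$ fixes $\tau$. Since the $\BB$-action on $\Stab_\cC(\cT)$ is that of deck transformations of a universal cover (Theorem \ref{thm:HLmain}(2)), it is free, so $\beta^k = \Theta^m$ and $\beta$ is periodic.

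The only nontrivial step is the very first one: translating a $\CC$-rescaling of the stability condition $\tau$ into a $\WW$-eigenvector statement for its central charge $Z$. This is precisely where the modular interpretation of the universal cover as a stability manifold enters (cf.\ Remark \ref{rem:direct2}); after this, the argument is purely formal, relying only on the finiteness of $\WW$, the shift-by-$[2]$ description of $\Theta$, and the freeness of the deck action.
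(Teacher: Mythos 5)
Your proof is correct, and its second half takes a genuinely different route from the paper's. The paper first shows $z\in\QQ\pi i$ by observing that multiplication by $e^{z}$ permutes the finite set of directions $\{e^{\pi i\phi_\alpha}\}_{\alpha\in R}$ cut out by the central charge (Lemma \ref{lem:rotaterational}), then deduces that $\Sigma_{\beta^q}$ is t-exact up to shift for the heart of $\tau$, and finishes categorically (Lemma \ref{lem:t-exact-and-periodic}): the induced permutation of the finitely many simples of the heart has finite order, so some power of $\Sigma_\beta$ is a shift by the rigidity Lemma \ref{lem:rigid2}, whence $\beta$ has a central power by faithfulness. You instead push the fixed-point equation down the covering map to the eigenvector equation $w\cdot Z=e^{z}Z$, use finiteness of $\WW$ to force $z\in 2\pi i\,\QQ$, and close with the freeness of the deck action, obtaining the explicit relation $\beta^k=\Theta^m$ (your computation agrees with the paper's $A_3$ examples, e.g.\ ${\beta'}^4=\Theta$). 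The two rationality arguments are minor variants of one another, but your endgame bypasses t-exactness and Lemma \ref{lem:rigid2} entirely, and does not require conjugating $\beta$ to make $\tau$ standard. The paper's route has the virtue of isolating a statement of independent interest (roots of t-exact autoequivalences are periodic) that is intrinsic to the categorical action; your route is shorter, produces explicit exponents, and shows that --- somewhat contrary to the emphasis of Remark \ref{rem:direct2} --- the backward implication can also be run on any abstract universal cover of $\cH/\WW$ with its lifted $\CC$-action, once one knows that $\Theta$ corresponds to the loop $\vartheta$. On that note, your closing remark slightly mislocates the categorical input: the first step uses only that $\pi$ records the central charge and that the $\CC$-action lifts scaling, which holds for any such cover; the one genuinely categorical ingredient in your argument is $\Sigma_\Theta\cong[2]$ (Proposition \ref{lem:shift2}, or equivalently Lemma \ref{lem:thetas}).
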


By conjugating $\beta$ if necessary, we can assume without loss of generality that $\tau$ is a standard stability condition. 
The first step is to restrict the set of $z$ that can arise in Proposition \ref{prop:direct2}:

\begin{Lemma}\label{lem:rotaterational}
In the setting of Proposition \ref{prop:direct2}, we have  $z\in \QQ\pi i$.
\end{Lemma}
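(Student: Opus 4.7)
The plan is short: extract the central-charge component of the equation $\beta \cdot \tau = z \cdot \tau$ and then exploit the finiteness of $\WW$ to force $e^z$ to be a root of unity.

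Concretely, write $\tau = (\cP, Z)$. By the definition of the $\BB$-action on $\Stab_\cC(\cT)$, the central charge of $\beta \cdot \tau$ is the pullback $Z \circ \Sigma_\beta^{-1}$ on $K_0(\cT)$; since the $\BB$-action on $K_0(\cT)$ factors through the Tits representation of $\WW$, this pullback is exactly $w \cdot Z$ for the contragredient $\WW$-action on $V'_\CC$, where $w \in \WW$ denotes the image of $\beta$. On the other hand, the central charge of $z \cdot \tau$ is $e^z Z$. Hence the hypothesis becomes the single vector equation
\[
    w \cdot Z = e^z Z
\]
in $V'_\CC$; that is, $Z$ is an eigenvector of $w$ with eigenvalue $e^z$.

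Now I would use that $\WW$ is finite. Let $n$ be the order of $w$ in $\WW$. Iterating the equation above gives $Z = w^n \cdot Z = e^{nz} Z$. Since $\tau$ is a standard stability condition, $\pi(\tau) = \ol{Z}$ lies in $\cH$, and in particular $Z \neq 0$ in $V'_\CC$. Therefore $e^{nz} = 1$. Writing $z = a + bi$ with $a,b \in \RR$, this forces $a = 0$ and $nb \in 2\pi \ZZ$, so $z = \tfrac{2k}{n}\pi i$ for some $k \in \ZZ$, which lies in $\QQ \pi i$.

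There is no essential obstacle in this argument; the only point requiring some care is the identification in the first paragraph of the central charge of $\beta \cdot \tau$ with $w \cdot Z$, which is a direct consequence of the compatibilities between the $\BB$-action on $\cT$, the induced action on $K_0(\cT)$, and the (contragredient) Tits representation of $\WW$ on $V'_\CC$ already recalled in Section~2.3.
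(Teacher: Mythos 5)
Your proof is correct. It rests on the same key identification as the paper's, namely that the central-charge component of $\beta\cdot\tau$ is $w\cdot Z$ for the contragredient $\WW$-action on $V'_\CC$, but then takes a different route to finiteness: you read off the eigenvector equation $w\cdot Z = e^{z}Z$ and use that $w$ has finite order $n$ in $\WW$ to conclude $e^{nz}=1$, whereas the paper instead observes that $w$ permutes the root system $R$, so the finite set of phases $\{e^{\pi i\phi_\alpha} \mid \alpha\in R\}\subset S^1$ is preserved and is permuted by multiplication by $e^{z}$, again forcing $e^{z}$ to be a root of unity. Your version is slightly more direct and yields the sharper conclusion $z\in\tfrac{2\pi i}{n}\ZZ$; it also makes explicit the Springer-theoretic content of a fixed point (the regular vector $Z$ is an $e^{z}$-eigenvector of $w$), which is precisely the structure exploited in the forward direction of the main theorem. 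The only point needing care, that $Z\neq 0$, you handle correctly; in fact any $\tau\in\Stab_\cC(\cT)$ has central charge $Z\in\cH$, hence regular and nonzero, so standardness of $\tau$ is not needed for this step.
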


\begin{proof}
Recall that $R$ denotes the set of roots, and let $w \in \WW$ be the image of $\beta$. Consider the following collection of finite points on the circle $$X_\tau=\{e^{\pi i \phi_\alpha} \mid \alpha \in R\} \subset S^1,$$ where  $e^{\pi i \phi_\alpha}$ is defined by $Z_\tau(\alpha)=m_\alpha e^{\pi i \phi_\alpha}$. Now, on the one hand, $X_{\tau} = X_{\beta\cdot\tau}$ since $Z_{\beta\cdot\tau}(\alpha)=Z_\tau(w^{-1}(\alpha))$ and the Coxeter group action permutes $R$. On the other hand, $X_{z\cdot\tau}=e^{z}X_\tau$. Hence, since $\beta\cdot \tau = z\cdot \tau$, we have that 
$X_{\tau}=e^{z}X_\tau$. Therefore multiplication by $e^{z}$ defines a permutation of $X_\tau$. Since this set is finite, this permutation has finite order, so that $(e^{z})^k=1$ for some $k$, i.e. $z \in \QQ\pi i$.
\end{proof}

Using the lemma above we write $z=\frac{p\pi i}{q}$. Then 
\[
\Sigma_\beta^q[p](\cP(\varphi)) = \cP(\varphi-p)[p] =\cP(\varphi),
\]
where $\cP$ is the slicing of $\tau$. 
The equation above implies that $\Sigma_\beta^q \cong \Sigma_{\beta^q}$ is t-exact up to shift by $-p$ (see Definition \ref{defn:t-exact}) with respect to the heart $\cP(0,1] = \heartsuit_\std$. The following lemma then shows that $\beta^q$ is periodic. This implies that $\beta$ is periodic as well, completing the proof of Proposition \ref{prop:direct2}.

\begin{Lemma}\label{lem:t-exact-and-periodic}
Let $\beta \in \BB$ and suppose that $\Sigma_{\beta}$ is t-exact up to shift (with respect to the standard t-structure).  Then $\beta$ is periodic.  
\end{Lemma}
\begin{proof}
Recall that  $\heartsuit_\std \subset \cT$ denotes the heart of the standard t-structure, which contains simple objects $P_s$ for $s\in \Gamma_0$. Since $\Sigma_{\beta}$ is $t$-exact up to some shift $a \in \ZZ$, $\Sigma_{\beta}[-a]:\heartsuit_\std \to \heartsuit_\std$ is an equivalence of abelian categories. Hence it induces a bijection $f:\Irr(\heartsuit_\std) \to \Irr(\heartsuit_\std)$ on the set of isomorphism classes of simple objects.

Since $\Irr(\heartsuit_\std)$ is finite, there exists $k \in \ZZ_{>0}$ such that $f^k=id$.  Hence there exists $d \in \ZZ$ (namely $d=ak$) such that $\Sigma_{\beta^k}(P_s)\cong P_s[d]$ for all simple objects $P_s\in \heartsuit_\std$.  By Lemma \ref{lem:rigid2} this implies that $\Sigma_{\beta^k} \cong [d]$.  Hence for any $\gamma \in \BB$, $\Sigma_{\gamma}\Sigma_{ \beta^k}  \cong \Sigma_{\beta^k}\Sigma_{\gamma}$.  By Theorem \ref{thm:HLmain}(2) this implies that $\beta^k \in Z(\BB)$, i.e. $\beta$ is periodic.
\end{proof}

\begin{Remark}\label{rem:direct2}
    Lemma \ref{lem:t-exact-and-periodic} is the only part of our argument where the stability space as a moduli space of categorical structures is used in an essential way. In particular, both the categorical rigidity Lemma \ref{lem:rigid2} and faithfulness of the categorical action (Theorem \ref{thm:HLmain}(2)) are used in the above argument.
\end{Remark}

\section{Future work}\label{sec:final}

Motivation for this work comes from an ongoing program to develop a dynamical theory for autoequivalence groups of triangulated categories.  A model theorem from topology is the Nielsen--Thurston classification \cite{Thur88}, which gives a dynamical classification of elements in the mapping class group of a (compact orientable) surface. In this classification, two dynamical classes -- periodic and pseudo-Anosov -- form the basic building blocks used to understand the dynamics of an arbitrary mapping class. Moreover, the action of the mapping class group on the Teichm\"uller space of the surface plays an important role in this theory: periodic maps have a fixed point in Teichm\"uller space whereas pseudo-Anosov maps have a pair of fixed points in its Thurston boundary.

The Nielson--Thurston classification does not directly give rise to a dynamical classification of elements in finite type Artin--Tits groups.  Indeed, finite type Artin--Tits groups typically do not arise as (subgroups of) surface mapping class groups \cite{Waj_typeE}. However, Theorem \ref{thm:main2} is a step towards an analogous categorical Nielsen--Thurston classification, which should apply not just to finite type Artin--Tits groups but also to a larger family of triangulated autoequivalence groups, such as the autoequivalence group of the derived category of a K3 surface; see e.g.\ \cite[Theorem 1.4]{FL_NielsenRealization}, which studies fixed points in $\Stab(\cD)/\CC$ in relation to finite subgroups of autoequivalences.

In this sense, Theorem \ref{thm:main2} is part of the larger program to understand autoequivalence groups of triangulated categories with metrics.  In this program, the triangulated category itself is thought of as something akin to the topological Fukaya category of a surface, and, in the presence of a triangulated metric such as a stability condition, an autoequivalence is studied via the discrete dynamical system it defines \cite{DHKK}.  Next steps in this program should involve developing the dynamical theory of elements in $\BB$ which give rise to pseudo-Anosov autoequivalences \cite{FFHKL} and studying the action of these autoequivalences on the compactifications of \cite{BDL}.


\bibliographystyle{alpha}
\bibliography{monbib}

\end{document}